\def\version{01/03/2021 -- version 1}

\documentclass[11pt,a4paper]{amsart}

\usepackage{amssymb,amscd,amsxtra,upref}
\usepackage{fullpage}

\usepackage[scr,scaled=1.1]{rsfso}
\usepackage{mathrsfs}


\usepackage[alphabetic,lite]{amsrefs}

\usepackage{xcolor}
\usepackage[all,poly,web,color,matrix,arrow]{xy}
\newdir{ >}{{}*!/-8pt/@{>}}

\usepackage{rotating}

\usepackage{euflag}

\usepackage{mathrsfs}
\usepackage{stmaryrd}

\usepackage{leftidx}

\usepackage{mathtools}
\usepackage{relsize}

\usepackage{tikz}
\usetikzlibrary{arrows,calc}
\usepackage{tikz-cd}[/tikz/commutative diagrams/math mode=1]
\usepackage{tikz-network}

\usepackage[nomargin,inline]{fixme}
\usepackage[]{todonotes}

\newtheorem{thm}{Theorem}[section]

\newtheorem{prop}[thm]{Proposition}

\theoremstyle{remark}

\newtheorem{rem}[thm]{Remark}

\theoremstyle{definition}

\numberwithin{equation}{section}


\def\leq{\leqslant}
\def\geq{\geqslant}
\def\iso{\cong}

\def\Lra{\Longrightarrow}

\def\({\left(}
\def\){\right)}
\def\<{\left<}
\def\>{\right>}
\def\:{\colon}
\def\.{\cdot}

\def\subset{\subseteq}

\def\bar#1{\overline{#1}}
\def\ph#1{\phantom{#1}}
\def\phi{{\varphi}}
\def\epsilon{{\varepsilon}}

\def\F{\mathbb{F}}

\def\k{\Bbbk}

\def\Mod{\mathbf{Mod}}

\DeclareMathOperator{\Cohom}{Cohom}
\DeclareMathOperator{\Coext}{Coext}
\DeclareMathOperator{\Ext}{Ext}
\DeclareMathOperator{\Hom}{Hom}

\DeclareMathOperator{\Cotor}{Cotor}

\def\Sp{\mathrm{Sp}}

\def\SU{\mathrm{SU}}
\def\U{\mathrm{U}}
\def\MSp{{M\Sp}}
\def\MSU{{M\SU}}
\def\MU{{M\mathrm{U}}}
\def\kO{{k\mathrm{O}}}

\def\StA{\mathcal{A}}
\def\StE{\mathcal{E}}
\def\StP{\mathcal{P}}
\DeclareMathOperator{\Sq}{Sq}

\def\op{{\mathrm{op}}}

\DeclareMathOperator{\Id}{Id}
\def\ev{\mathrm{ev}}
\DeclareMathOperator*{\colim}{colim}
\def\Palgebra{$P$-algebra}
\def\Palgebras{$P$-algebras}
\DeclareMathOperator{\pd}{pd}
\def\Mod{\mathbf{Mod}}
\def\MOD{\mathbf{Mod}}
\def\Comod{\mathbf{Comod}}

\newdir{ >}{{}*!/-8pt/@{>}}


\title[The dual of a $P$-algebra with applications
to comparison of Bousfield classes]
{On the dual of a $P$-algebra and its comodules,
with applications to comparison of some
Bousfield classes}
\date{last updated \version}
\author{Andrew Baker}
\address{
School of Mathematics \& Statistics,
University of Glasgow, Glasgow G12~8QQ, Scotland.}
\email{a.baker@maths.gla.ac.uk}
\urladdr{http://www.maths.gla.ac.uk/$\sim$ajb}
\thanks{
I would like to thank the following: The
Max-Planck-Institut f\"ur Mathematik in
Bonn for supporting my visit in January
2020; Tobias Barthel for raising the
question that motivated this work; Doug
Ravenel for support and encouragement
and teaching me so much over many decades;
Ken Brown and Nige Ray for numerous helpful
conversations on algebra and topology.  \\
I would like to dedicate this paper to the
memory of \textbf{Pete Bousfield} (1941--2020)
whose work on localisation has become a
central feature of modern homotopy theory.
}
\keywords{Homotopy theory, Steenrod algebra, Bousfield class}
\subjclass[2020]{Primary 55P42; Secondary 55S10, 57T05}

\begin{document}

\begin{abstract}
In his seminal work on localisation of spectra,
Ravenel initiated the study of Bousfield classes
of spectra related to the chromatic perspective.
In particular he showed that there were infinitely
many distinct Bousfield classes between
$\langle\MU\rangle$ and $\langle S^0\rangle$.
The main topological goal of this paper is
investigate how these Bousfield classes are
related to that of another classical Thom
spectrum~$\MSp$, and in particular how
$\langle\MSp\rangle$ is related to
$\langle\MU\rangle$.

We follow the approach of Ravenel, but adapt
it using the theory of $P$-algebras to give
vanishing results for cohomology. Our work
involves dualising and considering comodules
over duals of $P$-algebras; these ideas are
then applied to the mod~$2$ Steenrod algebra
and certain subHopf algebras.

\end{abstract}

\maketitle
\tableofcontents

\section*{Introduction}

In his seminal paper on localisation~\cite{DCR:Localn},
Doug Ravenel initiated the study of Bousfield
classes of spectra related to the chromatic
perspective. In particular he showed that
there were infinitely many distinct Bousfield
classes between $\langle\MU\rangle$ and
$\langle S^0\rangle$. The main topological
goal of this paper is investigate how these
Bousfield classes are related to that of
another classical Thom spectrum~$\MSp$,
and in particular how $\langle\MSp\rangle$
is related to $\langle\MU\rangle$. Of course,
there is a canonical map of ring spectra
$\MSp\to\MU$ so
$\langle\MSp\rangle\geq\langle\MU\rangle$.
This issue is mentioned by Hovey
\& Palmieri~\cite{MH&JHP:BousfieldLattice},
who also say that Ravenel had outlined an
argument that $\langle\MSp\rangle>\langle\MU\rangle$
but this has not appeared in print.

To achieve this we rework the algebraic
machinery in Ravenel's approach making use
of the theory of \Palgebras{} of Moore \&
Peterson~\cite{JCM-FPP:NearlyFrobAlgs},
subsequently developed further by
Margolis~\cite{HRM:Book}. We first consider
the dual of a \Palgebra{} and its comodules;
in order to set up some Cartan-Eilenberg
spectral sequences we discuss the homological
algebra required to make a bivariant derived
functor of the homomorphism for comodules
with the aim of obtaining three such spectral
sequences, one of which involves dualising
to modules over the \Palgebra{} itself because
comodules do not always have resolutions by
projective objects. After reviewing some
properties of the mod~$2$ Steenrod algebra
and its dual, we discuss doubling and then
describe some families of subHopf algebras
and their duals. Moving on to topology, we
introduce a sequence of loop spaces whose
colimit is equivalent to $B\Sp$ and which
give rise to a family of Thom spectra whose
colimit is equivalent to~$\MSp$. After
describing the homology of these as comodule
algebras we move on to our main result which
says that $2$-locally,
$\langle\MSp\rangle>\langle BP\rangle$
and there is an infinite sequence of
distinct Bousfield classes between
$\langle S^0\rangle$ and $\langle\MSp\rangle$.

\section{\Palgebras{} and their duals}
\label{sec:P-algebras}
We will make use material on \Palgebras{} and their
modules contained in~\cite{HRM:Book}*{chapter~13}.
Here is a summary of assumptions and conventions.
\begin{itemize}
\item
We will often suppress explicit mention of
internal grading in cohomology and just
write $M$ for $M^*$ when discussing a module
over $A=A^*$. When working in homology we will
usually write $M_*$ or $A_*$.
\item
We will work with graded vector spaces over
a field~$\k$ and in particular, $\k$-algebras
and their (co)modules. In our topological
applications,~$\k=\F_2$.

For a finite type graded vector space $V_*$ we think
of~$V_n$ as dual to $V^n=\Hom_\k(V_n,\k)$, so $V^*$
is the cohomologically graded degree-wise dual, and
bounded below means that $V^*$ is bounded below. We
can also start with a finite type graded vector space
$V^*$ and form $V_*$ where $V_n=\Hom_\k(V^n,\k)$;
the double dual of $V_*$ is canonically isomorphic
to $V_*$, and vice versa.

A graded vector space which is finite dimensional
will be referred to as a \emph{finite}; when~$\k$
is finite this terminology agrees with Margolis'
use of finite module over a \Palgebra.
\item
Recall that a \Palgebra~$A$ is a union
of connected cocommutative Hopf algebras
$A(n)\subset A(n+1)\subset A$. Each $A(n)$
is a Poincar\'e duality algebra and we
will denote its highest non-trivial
degree by~$\pd(n)$, which is also its
Poincar\'e duality degree; this satisfies
$\pd(n)<\pd(n+1)$. In general, \Palgebras{}
are not required to be of finite type,
but all the examples we consider have
that property and it is required in some
of our homological results so we will
assume it holds. We also stress that
each $A(n)$ is a local graded ring
and $A(n)^{\pd(n)}=A(n)^0=\k$. Other
important properties are that~$A$ is
flat as a left or right $A(n)$-module
and a coherent $\k$-algebra.

Of course our main examples are the
Steenrod algebra~$\StA$ and infinite
dimensional subHopf algebras.
\end{itemize}

We will use the following basic result
stated on page~195 of~\cite{HRM:Book}
but left as an exercise.
\begin{prop}\label{prop:Palg-finite}
Suppose that $A$ is a P-algebra. Let~$M$
be a finite $A$-module and~$F$ a bounded
below free $A$-module. Then
\[
\Ext_A^*(M,F)=0.
\]
\end{prop}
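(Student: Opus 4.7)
The plan is to reduce the statement in stages to showing $\Ext^*_A(\k,A)=0$, and then to attack the latter via the Cartan--Eilenberg spectral sequence for the Hopf algebra extensions $A(n)\subset A$, exploiting the Frobenius (Poincar\'e duality) structure of each $A(n)$.

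First I reduce on $M$: since $M$ is finite, it lives in a bounded range $[m_0,m_1]$ of internal degrees. Let $M_{\geq k}$ denote the $A$-submodule of elements in internal degrees $\geq k$ (a submodule because $A^{>0}$ raises degree). The finite chain $M=M_{\geq m_0}\supset M_{\geq m_0+1}\supset\cdots\supset M_{\geq m_1+1}=0$ has subquotients concentrated in a single internal degree, hence isomorphic to finite direct sums of shifts of $\k$, and the long exact sequence of $\Ext$ reduces the statement to $\Ext^*_A(\k,F)=0$. Coherence of $A$ (noted in the excerpt) gives $\k$ a resolution by finitely generated free $A$-modules, so $\Ext^*_A(\k,-)$ commutes with direct sums; writing $F=A\otimes V$ then reduces the problem to $\Ext^*_A(\k,A)=0$. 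The case $\Hom_A(\k,A)=0$ is direct: this group is the left annihilator of $A^{>0}$ in $A$, and using the decomposition $A\cong A(n)\otimes W_n$ as left $A(n)$-module together with the Poincar\'e-duality structure of $A(n)$ (whose left socle is $\k\cdot e_n$ with $|e_n|=\pd(n)$), the left annihilator of $A(n)^{>0}$ equals $e_n\otimes W_n$, which lives in internal degrees $\geq\pd(n)$. Since $A^{>0}=\bigcup_n A(n)^{>0}$ and $\pd(n)\to\infty$, intersecting over all $n$ yields zero in every degree.

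For $s\geq 1$, I apply the Cartan--Eilenberg spectral sequence for the normal Hopf extension $A(n)\to A\to A/\!/A(n)$,
\[
E_2^{p,q}=\Ext^p_{A/\!/A(n)}\bigl(\k,\Ext^q_{A(n)}(\k,A)\bigr)\Longrightarrow\Ext^{p+q}_A(\k,A).
\]
Since $A$ is free over $A(n)$ and $A(n)$ is self-injective, $A$ is injective as an $A(n)$-module, so $\Ext^q_{A(n)}(\k,A)=0$ for $q>0$ and the spectral sequence collapses to $\Ext^p_A(\k,A)\cong\Ext^p_{A/\!/A(n)}(\k,A^{A(n)})$ for every $n$. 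A cleft decomposition of the Hopf extension (afforded by cocommutativity of $A$ and freeness over $A(n)$) identifies $A^{A(n)}\cong\Sigma^{\pd(n)}A/\!/A(n)$ as an $A/\!/A(n)$-module, and combining gives
\[
\Ext^p_A(\k,A)^d\cong\Ext^p_{A/\!/A(n)}(\k,A/\!/A(n))^{d-\pd(n)}\quad\text{for every } n.
\]

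The main obstacle is the concluding argument. The left-hand side is independent of $n$, while the right-hand side is evaluated in internal degree $d-\pd(n)\to-\infty$ as $n\to\infty$. The hard part will be controlling the connectivity of $\Ext^p_{A/\!/A(n)}(\k,A/\!/A(n))$ in sufficiently negative internal degrees; since $A/\!/A(n)$ is itself an infinite-dimensional P-algebra, the cleanest route is probably to iterate the change-of-rings identification recursively with $A/\!/A(n)$ in place of $A$, producing a tower of shifted identifications whose mutual compatibility, combined with $\pd(n)\to\infty$, forces the bidegree of any putative non-zero class to be pushed arbitrarily far and hence to vanish.
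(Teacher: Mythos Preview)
Your reductions to $M=\k$ and $F=A$ are fine, and your argument that $\Hom_A(\k,A)=0$ is correct and essentially the same as the paper's (the paper argues directly for general finite $M$, but the idea---any nonzero element in the image would have to be annihilated by all of $A^{>0}$, which Poincar\'e duality in $A(n)$ forbids below degree $\pd(n)$---is identical).

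The real difference is in the case $s\geq 1$. The paper disposes of this in one line: by Margolis~\cite{HRM:Book}*{theorem~13.12}, bounded below projective (in particular free) modules over a \Palgebra{} are \emph{injective}, so $\Ext_A^s(M,F)=0$ for $s>0$ is immediate. You invoke self-injectivity of $A(n)$ inside your spectral sequence argument, but you never use the corresponding fact for $A$ itself, which is exactly what makes the whole problem trivial.

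Your Cartan--Eilenberg route, by contrast, leaves a genuine gap at the step you yourself flag. The identification
\[
\Ext^p_A(\k,A)^d \iso \Ext^p_{A/\!/A(n)}(\k,A/\!/A(n))^{d-\pd(n)}
\]
is reasonable, but your proposed conclusion does not go through: connectivity alone gives no lower bound on the internal degree of $\Ext^p_B(\k,B)$ for an infinite-dimensional connected algebra $B$ (the cobar complex has terms $\Hom_\k(\bar B^{\otimes p},B)$ which are nonzero in arbitrarily negative internal degree), so letting $\pd(n)\to\infty$ does not force vanishing. And the iteration you suggest is circular: $A/\!/A(n)$ is again a \Palgebra{}, so you are reducing the statement for $A$ to the same statement for $A/\!/A(n)$, with no induction parameter decreasing. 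The shortcut is simply to quote Margolis's injectivity theorem.
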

\begin{proof}
By~\cite{HRM:Book}*{theorem~13.12}, bounded
below projective $A$-modules are also injective,
so $\Ext_A^s(M,F)=0$ when $s>0$, therefore
we only have to show that $\Hom_A(M,F)=0$.
It suffices to prove this for the case~$F=A$.

Suppose that $0\neq\theta\in\Hom_A(M,A)$.
The image of $\theta$ contains a simple
submodule in its top degree, so let
$\theta(x)\neq0$ be in this submodule;
then $a\theta(x)=0$ for every positive
degree element $a\in A$. Now for some~$n$,
$\theta(x)\in A(n)^k$ where $k<\pd(n)$,
and by Poincar\'e duality for~$A(n)$
there exists $z\in A(n)$ for which
$0\neq z\theta(x)\in A(n)^{\pd(n)}$.
This gives a contradiction, hence no
such~$\theta$ can exist.
\end{proof}

A particular concern for us will be the
situation where we have a \emph{pair}
of \Palgebras{} $B\subseteq A$ (so~$B$
is a subHopf algebra of~$A$).
\begin{prop}\label{prop:ExtA/B}
For a pair of P-algebras $B\subseteq A$,
\[
\Ext^*_A(A\otimes_B\k,A) \iso \Ext^*_B(\k,A) = \Hom_B(\k,A) = 0.
\]
\end{prop}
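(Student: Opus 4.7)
The plan is a two-step reduction: first identify the $A$-Ext with a $B$-Ext via a standard change-of-rings (induction-restriction) argument, and then apply Proposition~\ref{prop:Palg-finite} inside the category of $B$-modules.

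For the first isomorphism, I would take a projective resolution $P_\bullet \to \k$ of the trivial $B$-module. Since $A$ is flat --- indeed free, by the Milnor--Moore theorem applied to the subHopf algebra inclusion $B \subseteq A$ --- as a right $B$-module, the induction functor $A \otimes_B -$ is exact and carries projectives to projectives, so $A \otimes_B P_\bullet \to A \otimes_B \k$ is a projective resolution of $A$-modules. The tensor-Hom adjunction supplies a natural isomorphism of cochain complexes
\[
\Hom_A(A \otimes_B P_\bullet, A) \iso \Hom_B(P_\bullet, A),
\]
and passing to cohomology yields $\Ext^*_A(A \otimes_B \k, A) \iso \Ext^*_B(\k, A)$.

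For the remaining equalities, I would observe that $\k$ is a finite $B$-module and that $A$, being free over $B$ with generators concentrated in non-negative degrees, is a bounded below free $B$-module. Proposition~\ref{prop:Palg-finite}, applied with $B$ playing the role of $A$, $\k$ playing the role of $M$, and $A$ playing the role of $F$, therefore gives $\Ext^*_B(\k, A) = 0$ in every degree. This simultaneously delivers the two remaining assertions in the displayed chain: the higher Ext groups vanish, so that $\Ext^*_B(\k, A) = \Hom_B(\k, A)$ (concentrated in degree zero), and the degree-zero part vanishes as well.

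The only non-formal input is the freeness of $A$ as a $B$-module, and so this is the step where I would be most careful. For pairs of connected Hopf algebras over a field this is classical (Milnor--Moore); in the \Palgebra{} context one recovers it by passing to the filtration $B(n) \subset A(n)$ of $B \subseteq A$ and assembling the freeness statements degree by degree. Once that is in hand the rest of the argument is purely formal.
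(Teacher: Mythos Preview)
Your proof is correct and follows essentially the same route as the paper: invoke Milnor--Moore to get freeness of $A$ over $B$, use that to obtain the change-of-rings isomorphism, and then apply Proposition~\ref{prop:Palg-finite} with $B$ in place of $A$. The paper's proof is terser (it simply cites Milnor--Moore and Proposition~\ref{prop:Palg-finite} without unpacking the adjunction), but the content is the same.
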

\begin{proof}
First we recall a classic result of Milnor \&
Moore~\cite{M&M:HopfAlg}*{proposition~4.9}:
$A$ is free as a left or right $B$-module.
This guarantees the change of rings isomorphism
and the second isomorphism follows from
Proposition~\ref{prop:Palg-finite}.
\end{proof}

It is crucial that~$B$ is it self a \Palgebra,
for example the vanishing result does not hold
if $B=A(n)$.

Since a \Palgebra{} $A$ is coherent, its finitely
generated modules are its coherent modules, and
they form an abelian category with finite limits
and colimits (in particular it has kernels,
cokernels and images). A coherent $A$-module~$M$
admits a finite presentation
\[
\xymatrix{
A^{\oplus k}\ar[r]^{\pi}& A^{\oplus \ell}\ar[r]& M\ar[r]&0
}
\]
which can be defined over some $A(n)$, i.e.,
there is a finite presentation
\[
\xymatrix{
A(n)^{\oplus k}\ar[r]^{\pi'}&A(n)^{\oplus \ell}\ar[r]& M'\ar[r]&0
}
\]
of $A(n)$-modules and a commutative diagram
of $A$-modules
\[
\xymatrix{
A\otimes_{A(n)}A(n)^{\oplus k}\ar[r]^{\Id\otimes\pi'}\ar@{->}[d]
& A\otimes_{A(n)}A(n)^{\oplus \ell}\ar[r]\ar@{<->}[d]^\iso
& A\otimes_{A(n)}M'\ar[r]\ar@{<->}[d]^\iso &0 \\
A^{\oplus k}\ar[r]^{\pi}& A^{\oplus \ell}\ar[r]& M\ar[r]&0
}
\]
with exact rows. It is standard that every
coherent $A$-module admits a resolution by
finitely generated free modules. But for a
\Palgebra{} we also have injective resolutions
by finitely generated free modules.
\begin{prop}\label{prop:Coherent-InjResn}
Let $M$ be a coherent module over a \Palgebra{}
$A$. Then $M$ admits an injective resolution
by finitely generated free modules.
\end{prop}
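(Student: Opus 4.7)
The plan is to build the resolution $0 \to M \to F_0 \to F_1 \to \cdots$ inductively, at each stage embedding a coherent module into a finitely generated free module and then taking the cokernel. Two observations ensure this works: by Margolis' Theorem~13.12 (already invoked in the proof of Proposition~\ref{prop:Palg-finite}) every bounded below projective $A$-module is injective, so each finitely generated free $F_i$ will automatically be injective; and since $A$ is coherent, the coherent $A$-modules form an abelian subcategory of $\Mod_A$ closed under cokernels, so the iteration makes sense.

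The crux is therefore, for a given coherent $A$-module $M$, to produce an embedding $M \hookrightarrow F_0$ into a finitely generated free $A$-module. I would argue as follows. Coherence allows a finite presentation of $M$ to be realised over some $A(n)$, giving $M \iso A \otimes_{A(n)} M'$ for a finite $A(n)$-module $M'$, exactly as in the discussion preceding the statement. Now $A(n)$ is a Poincar\'e duality algebra, hence Frobenius: the top-degree multiplication pairing $A(n) \otimes_\k A(n) \to \k$ (after projecting to the top degree $\pd(n)$) is non-degenerate and associative, producing an isomorphism $A(n) \iso \Hom_\k(A(n),\k)$ of $A(n)$-modules up to a grading shift. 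Consequently $A(n)$ is an injective cogenerator among finite $A(n)$-modules, and a standard argument embeds $M'$ into a finitely generated free $A(n)$-module: choose a surjection $A(n)^{\oplus m} \to \Hom_\k(M',\k)$ of $A(n)$-modules, which exists because $\Hom_\k(M',\k)$ is finite, and dualise.

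Applying $A \otimes_{A(n)}(-)$, which is exact since $A$ is free as a right $A(n)$-module (the Milnor-Moore result used in the proof of Proposition~\ref{prop:ExtA/B}), then yields the desired embedding $M \hookrightarrow A^{\oplus m}$ (up to grading shift). The cokernel $F_0/M$ is coherent by closure of the coherent subcategory under cokernels, so the procedure iterates and the resolution is produced.

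I expect the main technical point to be the embedding $M' \hookrightarrow A(n)^{\oplus m}$, which genuinely relies on the Frobenius/Poincar\'e duality structure of each $A(n)$; the other ingredients—freeness of $A$ over $A(n)$, closure of coherent modules under cokernels, and injectivity of finitely generated free $A$-modules—are all already in the paper's toolbox.
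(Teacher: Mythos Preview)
Your proof is correct and follows essentially the same route as the paper: reduce to $M\iso A\otimes_{A(n)}M'$, embed the finite $A(n)$-module $M'$ into a finitely generated free $A(n)$-module using Poincar\'e duality, extend scalars using flatness of $A$ over $A(n)$, and iterate using coherence of the cokernel and injectivity of bounded below free $A$-modules. The only cosmetic difference is that the paper phrases the embedding $M'\hookrightarrow J'$ as ``obvious for a simple module and then by induction on dimension'' rather than via the Frobenius self-duality of $A(n)$, but these amount to the same thing.
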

\begin{proof}
By~\cite{HRM:Book}*{theorem~13.12}, bounded
below projective $A$-modules are injective.

For some $n$, $M\iso A\otimes_{A(n)}M'$
where~$M'$ is a finitely generated $A(n)$-module.
It is standard that~$M'$ admits a monomorphism
$M'\to J'$ into a finitely generated free
module (since~$A(n)$ is a Poincar\'e duality
algebra this is obvious for a simple module
and can be proved by induction on dimension).
By flatness we can find a monomorphism of
$A$-modules
\[
M\xrightarrow{\iso} A\otimes_{A(n)}M'\to A\otimes_{A(n)}J'
\]
into a finitely generated free module with coherent
cokernel. Iterating this we can build an injection
resolution of the stated form.
\end{proof}

\begin{prop}\label{prop:Finite->coherent}
Let $M$ be a finite $A$-module and $N$ a coherent
$A$-module. Then
\[
\Ext_A^*(M,N) = 0.
\]
\end{prop}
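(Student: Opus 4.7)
The plan is to reduce this directly to Proposition~\ref{prop:Palg-finite} via the injective resolution furnished by Proposition~\ref{prop:Coherent-InjResn}. Since $N$ is coherent, that proposition produces a resolution $0 \to N \to J^0 \to J^1 \to \cdots$ in which every $J^s$ is a finitely generated free $A$-module. By \cite{HRM:Book}*{theorem~13.12} (as already invoked in the proof of Proposition~\ref{prop:Palg-finite}), each such bounded below free module is injective, so this really is an injective resolution of~$N$ and therefore
\[
\Ext_A^*(M,N) \iso H^*\bigl(\Hom_A(M,J^\bullet)\bigr).
\]

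Now each $J^s$, being a finite direct sum of shifts of the connected algebra~$A$, is bounded below and free; hence Proposition~\ref{prop:Palg-finite} applies with $F=J^s$ to give $\Hom_A(M,J^s) = 0$ for every~$s$. The cochain complex $\Hom_A(M,J^\bullet)$ is therefore identically zero, so its cohomology vanishes in every degree.

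There is really no obstacle here: the only point needing a passing remark is that a finitely generated free $A$-module, being a finite sum of shifts of a connected algebra, is automatically bounded below, so the hypothesis of Proposition~\ref{prop:Palg-finite} is satisfied term by term. Once the injective resolution of Proposition~\ref{prop:Coherent-InjResn} is in hand, the statement is a one-line consequence.
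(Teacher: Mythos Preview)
Your proof is correct and follows essentially the same approach as the paper: take the injective resolution by finitely generated free modules from Proposition~\ref{prop:Coherent-InjResn}, then apply Proposition~\ref{prop:Palg-finite} term by term to see that the complex $\Hom_A(M,J^\bullet)$ vanishes identically. Your added remark that a finitely generated free module over a connected algebra is automatically bounded below is a fair clarification, but otherwise the argument is the paper's own.
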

\begin{proof}
By Proposition~\ref{prop:Coherent-InjResn} there
is an injective resolution $N\to J^*$ where
each $J^s$ is a finitely generated free module.
Also, by Proposition~\ref{prop:Palg-finite}
$\Hom_A(M,J^s) = 0$ and so $\Ext_A^s(M,N) = 0$.
\end{proof}

For example, every left $A$-module of the form
$A/\!/A(n)=A\otimes_{A(n)}\k$ is coherent so
it admits such an injective resolution and
$\Ext_A^s(\k,A/\!/A(n)) = 0$.

\subsection*{The dual of a \Palgebra{} and its comodules}
The theory of \Palgebras{} can be dualised: given
a finite type \Palgebra~$A$, we define its dual
$A_*$ by setting $A_n=\Hom_\k(A^n,\k)$ and making
this a commutative Hopf algebra. We will refer to
the dual Hopf algebra of a finite type \Palgebra{}
as a \emph{$P_*$-algebra}; as far as we know this
is not standard terminology but it seems appropriate.

When working with comodules over a $P_*$-algebra~$A_*$
we will use homological grading. For left
$A_*$-comodules which are bounded below and
of finite type there is no significant difference
between working with them or their (degree-wise)
duals as $A$-modules. In particular,
\[
 \Cohom_{A_*}(M_*,N_*) \iso \Hom_{A}(N^*,M^*),
\]
where $M^n=\Hom_{\F_2}(M_n,\F_2)$ and $M^*$ is
made into a left $A$-module using the antipode.
More generally,
\[
\Coext^{s,*}_{A_*}(M_*,N_*) \iso \Ext^{s,*}_{A}(N^*,M^*),
\]
where $\Coext^{s,*}_{A_*}(M_*,-)$ denotes the
right derived functor of $\Cohom_{A_*}(M_*,-)$,
which can be computed using extended comodules
which are injective comodules here since we
are working over a field.

Here are the dual versions of
Propositions~\ref{prop:Palg-finite}
and~\ref{prop:ExtA/B}. Recall that a
cofree $A_*$-comodule is one of the
form $A_*\otimes W_*$.
\begin{prop}\label{prop:P*alg-finite}
Suppose that $A_*$ is a $P_*$-algebra.
Let~$M_*$ be a finite $A_*$-comodule
and~$C_*$ a bounded below finite type
cofree $A_*$-comodule. Then
\[
\Coext_{A_*}^*(C_*,M_*)=0.
\]
\end{prop}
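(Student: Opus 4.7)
The plan is to dualise everything in sight and invoke the module-theoretic statement already proved as Proposition~\ref{prop:Palg-finite}. The key tool is the duality isomorphism
\[
\Coext^{s,*}_{A_*}(C_*, M_*) \iso \Ext^{s,*}_{A}(M^*, C^*)
\]
recorded just before the proposition, which reduces vanishing of $\Coext^*_{A_*}(C_*, M_*)$ to vanishing of $\Ext^*_A(M^*, C^*)$.

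The first step is to unpack the dual of the cofree $A_*$-comodule. Writing $C_* = A_* \otimes W_*$, the hypothesis that $C_*$ is bounded below and of finite type forces $W_*$ to share both properties, so its degree-wise dual $W^*$ is itself bounded below and of finite type. In this finite type setting, degree-wise dualisation commutes with the tensor product and yields an identification
\[
C^* \iso A \otimes W^*
\]
of left $A$-modules (with $A$ acting through the first tensor factor, after the usual antipode adjustment discussed before the proposition). This exhibits $C^*$ as a bounded below free $A$-module, with a basis concentrated in the degrees of~$W^*$. Meanwhile $M^*$ is a finite $A$-module, since degree-wise duality preserves finite total dimension. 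Proposition~\ref{prop:Palg-finite} then immediately supplies $\Ext^*_A(M^*, C^*) = 0$, and the displayed isomorphism transports this vanishing back to $\Coext^*_{A_*}(C_*, M_*)$.

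I do not foresee a genuine obstacle here; the one point that merits care is the identification $C^* \iso A \otimes W^*$ as left $A$-modules, including tracking how the antipode intervenes when converting the cofree comodule structure into a free module structure. The bounded-below and finite-type hypotheses are precisely what make this degree-wise dualisation well-behaved, after which the proof is entirely formal.
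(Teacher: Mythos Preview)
Your proposal is correct and is exactly the argument the paper intends: the proposition is presented without proof as the dual of Proposition~\ref{prop:Palg-finite}, and the passage immediately preceding it records precisely the duality isomorphism $\Coext^{s,*}_{A_*}(C_*,M_*)\iso\Ext^{s,*}_{A}(M^*,C^*)$ that you invoke. Your identification of $C^*$ as a bounded below free $A$-module and of $M^*$ as a finite $A$-module, followed by an appeal to Proposition~\ref{prop:Palg-finite}, is the intended dualisation.
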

\begin{prop}\label{prop:CoextA/B}
For a surjective morphism of Hopf algebras
$A_*\to B_*$ dual to a pair of P-algebras
$B\subseteq A$,
\[
\Coext^{*,*}_{A_*}(A_*,A_*\square_{B_*}\k)
\iso
\Coext^{*,*}_{B_*}(A_*,\k) = \Cohom_{B_*}(A_*,\k) = 0.
\]
\end{prop}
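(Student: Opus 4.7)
This proposition is exactly the dual counterpart of Proposition~\ref{prop:ExtA/B}, so the plan is to mirror that proof with each ingredient replaced by its coalgebra-theoretic dual. There are two steps: the change-of-coscalars isomorphism on the left, and the vanishing statement on the right.

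For the first isomorphism, I would invoke the dual of the Milnor--Moore freeness result: since $A$ is free as a left (or right) $B$-module, dualising gives that $A_*$ is cofree as a $B_*$-comodule on either side. This cofreeness means $A_*$ is injective in $B_*$-comodules, so the functor $A_* \square_{B_*} (-)$, which is right adjoint to restriction of coscalars along $A_* \twoheadrightarrow B_*$, is exact and sends injectives to injectives. Combined with the adjunction
\[
\Cohom_{A_*}(N_*, A_* \square_{B_*} M_*) \iso \Cohom_{B_*}(N_*, M_*),
\]
(with $N_*$ on the right viewed as a $B_*$-comodule by restriction), a Grothendieck composition-of-derived-functors argument yields
\[
\Coext^{*,*}_{A_*}(A_*, A_* \square_{B_*} \k) \iso \Coext^{*,*}_{B_*}(A_*, \k),
\]
exactly as in the module-theoretic proof of Proposition~\ref{prop:ExtA/B}.

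For the remaining equalities, I would apply Proposition~\ref{prop:P*alg-finite} to the $P_*$-algebra $B_*$. The first argument $A_*$ is a bounded below finite type cofree $B_*$-comodule (by the Milnor--Moore dual just used), and the second argument $\k$ is finite. Hence $\Coext^{*,*}_{B_*}(A_*, \k) = 0$ in all bidegrees, which forces both the concentration in cohomological degree zero ($\Coext^{*,*}_{B_*}(A_*, \k) = \Cohom_{B_*}(A_*, \k)$) and the vanishing of the $\Cohom$ itself.

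The only real subtlety is the first step: one must be careful that the change-of-coscalars adjunction used is the correct dualisation of the ordinary change-of-rings adjunction, and that the cofreeness of $A_*$ over $B_*$ is used exactly where flatness of $A$ over $B$ is used in Proposition~\ref{prop:ExtA/B}. Once this bookkeeping is in place, everything reduces to a direct appeal to Proposition~\ref{prop:P*alg-finite}, and the proof is formally identical to the one already given for Proposition~\ref{prop:ExtA/B}.
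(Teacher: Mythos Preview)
Your proposal is correct and matches the paper's approach: the paper simply presents this proposition as the dual version of Proposition~\ref{prop:ExtA/B} without spelling out details, and what you have written is precisely the comodule-theoretic transcription of that proof, using the cofreeness of $A_*$ over $B_*$ (dual to Milnor--Moore freeness) for the change-of-rings isomorphism and Proposition~\ref{prop:P*alg-finite} in place of Proposition~\ref{prop:Palg-finite} for the vanishing. The only alternative the paper hints at is to bypass the comodule argument entirely via the identification $\Coext^{s,*}_{A_*}(M_*,N_*)\iso\Ext^{s,*}_A(N^*,M^*)$ stated just before, which reduces the statement directly to Proposition~\ref{prop:ExtA/B}; either route is equally valid here.
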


If $A_*$ is a $P_*$-algebra then we will call
an $A_*$-comodule~$M_*$ \emph{coherent} if its
dual $M^*$ is a coherent $A$-module. Here is
a dual version of Proposition~\ref{prop:Coherent-InjResn}.
\begin{prop}\label{prop:CoherentComod-ProjResn}
Let $M_*$ be a coherent comodule over a $P_*$-algebra{}
$A_*$. Then~$M_*$ admits a resolution by finitely
generated cofree comodules.
\end{prop}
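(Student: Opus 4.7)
The strategy is to dualize Proposition~\ref{prop:Coherent-InjResn} degree-wise. Given a coherent $A_*$-comodule $M_*$, by definition its degree-wise dual $M^*$ is a coherent left $A$-module; since coherent modules are in particular finitely generated over a finite type $P$-algebra, $M^*$ is bounded below and of finite type, as is $M_*$.

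First I would apply Proposition~\ref{prop:Coherent-InjResn} to $M^*$ to produce an injective resolution
\[
0 \to M^* \to J^0 \to J^1 \to \cdots
\]
in which each $J^s$ is a finitely generated free left $A$-module, and hence of finite type and bounded below (with a uniform lower bound once $M^*$ is fixed, since the construction in Proposition~\ref{prop:Coherent-InjResn} iteratively embeds a coherent module into a free module in the same range of degrees).

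Next I would take the degree-wise $\k$-linear dual. Because $\Hom_\k(-,\k)$ is exact on bounded-below finite type graded vector spaces, this yields an exact sequence
\[
\cdots \to (J^1)_* \to (J^0)_* \to M_* \to 0
\]
of $A_*$-comodules. To finish, I need the identification of the dual of a finitely generated free $A$-module with a finitely generated cofree $A_*$-comodule: if $J^s \iso A\otimes_\k V$ for some finite dimensional graded $\k$-vector space $V$, then the degree-wise dual is $A_*\otimes_\k V_*$ with the coaction induced from the comultiplication of $A_*$, i.e.\ the cofree $A_*$-comodule on $V_*$. This uses the canonical Hopf algebra duality between $A$ and $A_*$ together with the fact that the left $A$-action on $A\otimes_\k V$ is dual to the left $A_*$-coaction on $A_*\otimes_\k V_*$.

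The only nontrivial point in this argument is tracking the finite type and bounded-below hypotheses carefully enough to justify the exactness of the dualized complex, and verifying the dictionary between finitely generated free $A$-modules and finitely generated cofree $A_*$-comodules. Once these are in hand, the statement is a direct formal consequence of Proposition~\ref{prop:Coherent-InjResn}; no new homological input is required.
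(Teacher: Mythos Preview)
Your proposal is correct and follows essentially the same approach as the paper: the paper's proof simply says to take the injective resolution of $M^*$ provided by Proposition~\ref{prop:Coherent-InjResn} and dualise. Your version spells out the finite type and boundedness checks and the free/cofree dictionary that the paper leaves implicit, but there is no difference in strategy.
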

\begin{proof}
Take an injective resolution of $M^*$ as
in Proposition~\ref{prop:Coherent-InjResn}
and then take duals to obtain a projective
resolution.
\end{proof}

Notice that since $A_*$ is an injective comodule
we have
\[
\Coext_{A_*}^*(A_*,A_*) \iso \Hom_\k(A_*,\k).
\]

\begin{prop}\label{cor:CoherentComod-ProjResn}
Let $M_*$ be a coherent comodule over a $P_*$-algebra{}
$A_*$ and let $N_*$ be a finite $P_*$-comodule.
Then
\[
\Coext_{A_*}^*(M_*,N_*)=0.
\]
\end{prop}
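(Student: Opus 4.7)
The plan is to reduce this to the module-theoretic Proposition~\ref{prop:Finite->coherent} via the duality isomorphism
\[
\Coext^{s,*}_{A_*}(M_*,N_*) \iso \Ext^{s,*}_A(N^*,M^*)
\]
recorded earlier in the section. First I would verify that the hypotheses of this identification are met for the pair $(M_*,N_*)$ at hand: $N_*$ is finite, so $N^*$ is a finite $A$-module; and $M_*$ being coherent means by definition that $M^*$ is a coherent $A$-module, so in particular $M^*$ is finitely generated and hence bounded below of finite type (since $A$ itself is bounded below of finite type), making the duality identification well defined.

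With the duality in hand, Proposition~\ref{prop:Finite->coherent} applies directly to the pair $(N^*,M^*)$ — finite and coherent respectively — yielding $\Ext^{s,*}_A(N^*,M^*)=0$ for every $s$, which translates back to $\Coext^{s,*}_{A_*}(M_*,N_*)=0$.

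There is no substantive obstacle; the only care required is the bookkeeping of grading and finiteness conditions justifying the passage to duals, and that is routine once one observes that coherence of $M_*$ forces $M^*$ into the setting where Proposition~\ref{prop:Finite->coherent} is available. A second, more direct, comodule-side route would be to resolve $M_*$ by finitely generated cofree comodules via Proposition~\ref{prop:CoherentComod-ProjResn} and then apply Proposition~\ref{prop:P*alg-finite} termwise; this however relies on computing $\Coext$ from a projective resolution in its \emph{first} variable, a balance property of the bivariant derived functor that the dual-module route neatly sidesteps.
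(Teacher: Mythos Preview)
Your argument is correct. The paper, however, takes precisely the route you sketch as your \emph{alternative}: it chooses a resolution $P_{\bullet,*}\to M_*$ by finitely generated cofree comodules via Proposition~\ref{prop:CoherentComod-ProjResn} and then invokes Proposition~\ref{prop:P*alg-finite} to obtain $\Cohom_{A_*}(P_{s,*},N_*)=0$ for every~$s$. The balance concern you raise is addressed earlier in Remark~\ref{rem:comodules-modules}, where it is shown that for coherent comodules $\Coext^*_{A_*}(-,-)$ can indeed be computed from a projective (cofree) resolution in the first variable, since duality identifies it with the balanced bifunctor $\Ext^*_A$. Your primary route---dualising and applying Proposition~\ref{prop:Finite->coherent} directly---is marginally cleaner in that it never needs to appeal to that balance discussion; the paper's route stays on the comodule side and makes the r\^ole of the cofree resolution explicit. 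The two arguments are really the same fact viewed from opposite ends of the duality of Remark~\ref{rem:comodules-modules}.
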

\begin{proof}
Let $P_{\bullet,*}\to M_*\to 0$ be a resolution
of~$M_*$ by cofree comodules. Then by
Proposition~\ref{prop:P*alg-finite}, for each
$s\geq0$ we have
\[
\Cohom_{A_*}(P_{s,*},N_*) = 0,
\]
and the result follows.
\end{proof}

\begin{rem}\label{rem:comodules-modules}
For bounded below finite type comodules over
a $P_*$-algebra $A_*$ dual to a \Palgebra{}
$A$, taking degree-wise duals defines an
equivalence of categories
\[
\xymatrix{
\Comod_{A_*}^{\flat,\,\mathrm{f.t.}}\ar@<.5ex>[rr]^(.5){(-)^*}
&& (\Mod_{A}^{\flat,\,\mathrm{f.t.}}\ar@<.5ex>[ll]^(.5){(-)_*})^\op
}
\]
between the $A_*$-comodule and the $A$-module
categories. Moreover, these functors are exact,
so this equivalence identifies injective
comodules (which are retracts of extended
comodules) with projective modules.
By~\cite{HRM:Book}*{theorem~13.12}, projective
$A$-modules are injective so it also identifies
injective comodules as projective objects (this
is not true in general).

In fact this equivalence fits into a bigger
diagram
\begin{equation}\label{eq:Comod->Mod}
\xymatrix{
&& \Mod_{A}^{\sharp,\,\mathrm{f.t.}}\ar@<.5ex>[d]^(.5){(-)^*} \\
\Comod_{A_*}^{\flat,\,\mathrm{f.t.}}
\ar@<.5ex>[rr]^(.5){(-)^*}
\ar@/^15pt/[urr] \ar@/_15pt/[drr]
&& (\Mod_{A}^{\flat,\,\mathrm{f.t.}})^\op
\ar@<.5ex>[ll]^(.5){(-)_*}\ar@<.5ex>[u]^(.5){(-)_*}\ar[d]  \\
&& \MOD_{A}^\op
}
\end{equation}
where $\Mod_{A}^{\natural,\,\mathrm{f.t.}}$
denotes the category of finite type bounded
below homologically graded $A$-modules
(with $A$ acting by decreasing degree),
$\Mod_{A}^{\flat,\,\mathrm{f.t.}}$ denotes
the category of finite type bounded below
cohomologically graded $A$-modules
and $\MOD_{A}$ denoting the category
of all $A$-modules. All of the functors
here are exact.

For a fixed $A_*$-comodule $M_*$, the
functor
\[
\Cohom_{A_*}(M_*,-)=\Comod_{A_*}^{\flat,\,\mathrm{f.t.}}(M_*,-)
\to\Mod_\k^{\flat,\,\mathrm{f.t.}}
\]
is left exact and has right derived functors
$\Coext^*_{A_*}(M_*,-)$. Since
\[
\Cohom_{A_*}(M_*,-)\iso\Hom_{A}((-)^*,M^*)
=\Mod_{A}^{\flat,\,\mathrm{f.t.}}((-)^*,M^*)
=\MOD_{A}((-)^*,M^*)
\]
and injective comodules are sent to projective
modules, we also have
\begin{equation}\label{eq:Coext-Ext}
\Coext^*_{A_*}(M_*,-)\iso \Ext^*_{A}((-)^*,M^*).
\end{equation}

The contravariant functor
$\Comod_{A_*}^{\flat,\,\mathrm{f.t.}}\to\MOD_{A}^\op$
allows us to define cohomological invariants
of comodules using injective resolutions
in $\MOD_{A}$ as a substitute for projective
resolutions in
$\Comod_{A_*}^{\flat,\,\mathrm{f.t.}}$.
In effect for a comodule~$N_*$ we define
\[
\Coext^*_{A_*}(-,N_*)=\Ext_A(N^*,(-)^*).
\]
Of course this is calculated using injective
resolutions of $A$-modules; since $\Ext_A(-,-)$
is a balanced functor,~\eqref{eq:Coext-Ext}
implies that $\Coext^*_{A_*}(-,-)$ is too,
whenever we can use projective comodule
resolutions in the first variable. For
example, if we restrict to the subcategory
of coherent comodules we obtain balanced
bifunctors
\[
\Coext^s_{A_*}(-,-)\:
(\Comod_{A_*}^{\mathrm{coh}}(-,-))^{\op}
\otimes\Comod_{A_*}^{\mathrm{coh}}(-,-)
\to \Mod_\k^{\flat,\,\mathrm{f.t.}}.
\]
\end{rem}

Given a surjection of $P_*$-algebras
$A_*\to B_*$ there are
adjunction isomorphisms of the form
\begin{align}\label{eq:Cohom-Cotor-1}
\Cohom_{A_*}(-,-)
&\iso
\Cohom_{B_*\backslash\!\backslash A_*}((B_*\backslash\!\backslash A_*)\square_{A_*}(-),-),
 \\ \label{eq:Cohom-Cotor-2}
\Cohom_{B_*}(-,-)
&\iso
\Cohom_{A_*}(-,A_*\square_{B_*}(-)),
\end{align}
which we will use to construct Grothendieck
spectral sequences.

\section{Some homological algebra for comodules}
\label{sec:HomAlg}

In this section we describe some Cartan-Eilenberg
spectral sequences for comodules over a commutative
Hopf algebra over a field. Some of these are
similar to other examples in the literature
such as that for computing~$\Cotor$ for Hopf
algebroids in~\cite{DCR:GreenBook}. However,
the spectral sequence that plays the biggest
r\^ole in our calculations is not defined
in the same way but requires dualising and
is a standard Cartan-Eilenberg spectral
sequence for $\Ext$ of modules over a ring.

To ease notation, in this section we suppress
internal gradings and assume that all our
objects are connective and of finite type
over a field~$\k$. We refer to the classic
\cite{M&M:HopfAlg} as well as the more
recent \cite{JPM&KP:MoreConcise} for notation
and basic ideas about graded Hopf algebras.

Suppose we have a sequence of homomorphisms
of connected commutative graded Hopf algebras
over~$\k$,
\[
K\backslash\!\backslash H \rightarrowtail H \twoheadrightarrow K,
\]
where in the notation of \cite{M&M:HopfAlg}*{definition~3.5},
\[
K\backslash\!\backslash H
= \k\square_K H
= H \square_K\k
\subseteq H .
\]
We also assume given a left
$K\backslash\!\backslash H$-comodule
$M$ and a left $H $-comodule~$N$. Of course
$M$ and $N$ inherit structures of $H $-comodule
and $K$-comodule respectively, where~$M$
is trivial as a $K$-comodule. Our aim is
to calculate $\Coext^*_H (M,N)$, the right
derived functor of
\[
 Cohom_H (M,-)\:
\Comod_{K\backslash\!\backslash H}\to\Mod_\k;
\quad N\mapsto \Cohom_H (M,N).
\]

Following Hovey~\cite{MH:HtpyThyComods}, we
will write $U\overset{H }\wedge V$ to indicate
the tensor product of two $H $-comodules
$U\otimes V=U\otimes_\k V$ with the diagonal
coaction given by the composition

\[
\xymatrix{
U\otimes V\ar[r]_(.3){\mu\otimes\mu}\ar@/^18pt/[rrr]
& (H \otimes U)\otimes (H \otimes V)\ar[r]_{\iso}
& (H \otimes H)\otimes (U\otimes V)\ar[r]_(.55){\phi\otimes\Id}
& H \otimes (U\otimes V).
}
\]
For a vector space $W$, the notation $U\otimes W$
will be used to denote $H $-comodule with
coaction

\[
\xymatrix{
U\otimes W\ar[r]_(.4){\mu\otimes\Id}\ar@/^14pt/[rr]
& (H \otimes U)\otimes W\ar[r]_(.5){\iso}
& H \otimes(U\otimes W)
}
\]
carried on the first factor alone.

If $L$ is a left $H $-comodule, then
there is a well-known isomorphism of left
$H $-comodules
\begin{equation}\label{eq:Comodtwisting}
K\backslash\!\backslash H \overset{H }\wedge L
=
(H \square_K\k)\overset{H }\wedge L
\iso
H \square_K L,
\end{equation}
We can also regard
$K\backslash\!\backslash H =\k\square_K H $
as a right $H $-comodule to form the left
$K\backslash\!\backslash H $-comodule
\begin{equation}\label{eq:Comodtwisting-Sigma}
K\backslash\!\backslash H \square_H  L
= (\k\square_K H )\square_H  L
\iso \k\square_K L;
\end{equation}
in particular, if $L$ is a trivial $K$-comodule
then as left $K\backslash\!\backslash H$-comodules,
\begin{equation}\label{eq:Comodtwisting-trivial}
K\backslash\!\backslash H \square_H  L
\iso L.
\end{equation}

We will use two more functors
\[
\Comod_{H }\to\Comod_{K\backslash\!\backslash H};
\quad
N\mapsto
K\backslash\!\backslash H\square_H  N
= (\k\square_K H)\square_H  N
\iso \k\square_K N
\]
and
\[
\Comod_{K\backslash\!\backslash H}\to\Mod_\k;
\quad
N\mapsto  \Cohom_{K\backslash\!\backslash H}(M,N).
\]
Notice that there is a natural isomorphism
\[
 \Cohom_{K\backslash\!\backslash H}(M,K\backslash\!\backslash H\square_H (-))
\iso  \Cohom_H (M,-)
\]
and for an injective $H $-comodule~$J$,
$K\backslash\!\backslash H \square_H  J$
is an injective $K\backslash\!\backslash H$-comodule.
This means we are in a situation where we
have a Grothendieck composite functor spectral
sequence which in this case is a form of
Cartan-Eilenberg spectral sequence; for details
see \cite{CAW:HomAlg}*{section~5.8} for example.

\begin{prop}\label{prop:CESS-1}
Let $M$ be a left\/
$K\backslash\!\backslash H$-comodule
and $N$ a left\/ $H $-comodule. Then
there is a first quadrant cohomologically
indexed spectral sequence with
\[
\mathrm{E}_2^{s,t} =
\Coext_{K\backslash\!\backslash H}^s(M,\Cotor_K^t(\k,N))
\Lra
\Coext_H ^{s+t}(M,N).
\]
If $N$ is a trivial\/ $K$-comodule then
\[
\mathrm{E}_2^{s,t} \iso
\Coext_{K\backslash\!\backslash H}^s(M,
\Cotor_K^t(\k,\k)\overset{K\backslash\!\backslash H}\wedge N).
\]
\end{prop}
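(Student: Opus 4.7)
The plan is to recognise the configuration set up immediately before the proposition as a Grothendieck composite-functor spectral sequence, and then identify the resulting $E_2$-term. Define the two left-exact functors
\[
F\:\Comod_H\to\Comod_{K\backslash\!\backslash H},\qquad F(-)=K\backslash\!\backslash H\square_H(-)\iso \k\square_K(-),
\]
\[
G\:\Comod_{K\backslash\!\backslash H}\to\Mod_\k,\qquad G(-)=\Cohom_{K\backslash\!\backslash H}(M,-).
\]
The natural isomorphism $\Cohom_{K\backslash\!\backslash H}(M,K\backslash\!\backslash H\square_H(-))\iso\Cohom_H(M,-)$ displayed just before the statement identifies $G\circ F$ with $\Cohom_H(M,-)$, while the observation that $K\backslash\!\backslash H\square_H J$ is an injective $K\backslash\!\backslash H$-comodule for every injective $H$-comodule $J$ means that $F$ carries injectives to $G$-acyclics. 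Grothendieck's composite functor theorem as recalled in \cite{CAW:HomAlg}*{section~5.8} then produces a first-quadrant spectral sequence with
\[
\mathrm{E}_2^{s,t}=R^sG\(R^tF(N)\)\Lra R^{s+t}(G\circ F)(N)=\Coext_H^{s+t}(M,N).
\]

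The next step is to identify both sides explicitly. The abutment has already been named; on the $E_2$-page, $R^sG(-)=\Coext^s_{K\backslash\!\backslash H}(M,-)$ by definition, and $R^t F(-)=R^t\(\k\square_K(-)\)=\Cotor_K^t(\k,-)$ because the right derived functors of $\k\square_K(-)$ are exactly the $K$-cotor groups. The residual left $K\backslash\!\backslash H$-coaction on $\Cotor_K^t(\k,N)$ arises by computing with an injective $H$-comodule resolution $N\to J^\bullet$ and applying $\k\square_K(-)$: each term $\k\square_K J^s$ retains a $K\backslash\!\backslash H$-coaction via \eqref{eq:Comodtwisting-Sigma}, and this coaction descends to cohomology.

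For the second assertion, assume $N$ is a trivial $K$-comodule. Choose an injective $K$-comodule resolution $\k\to J_0^\bullet$; since tensoring over $\k$ with $N$ is exact and carries cofree $K$-comodules $K\otimes V$ to cofree $K$-comodules $K\otimes(V\otimes N)$, the complex $J_0^\bullet\otimes N$ with $K$-coaction on the first factor is an injective $K$-comodule resolution of $N$. Hence as graded vector spaces
\[
\Cotor_K^t(\k,N)\iso H^t\(\k\square_K(J_0^\bullet\otimes N)\)
\iso H^t(\k\square_K J_0^\bullet)\otimes N
\iso\Cotor_K^t(\k,\k)\otimes N.
\]
Upgrading this to an isomorphism of $K\backslash\!\backslash H$-comodules requires tracking coactions: $\Cotor_K^t(\k,\k)$ picks up a residual $K\backslash\!\backslash H$-coaction from the Hopf structure of $H$, while $N$ carries its own $K\backslash\!\backslash H$-coaction (well defined precisely because its underlying $K$-coaction is trivial), and the two combine into the diagonal coaction denoted $\overset{K\backslash\!\backslash H}\wedge$.

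The bookkeeping in this last paragraph is the only delicate point; the rest of the argument is a direct application of the composite-functor machinery to objects whose behaviour has already been verified in the preceding setup. In particular, the main obstacle is checking that the Künneth-style decomposition in the trivial case respects the diagonal $K\backslash\!\backslash H$-structure, rather than merely yielding an isomorphism of graded vector spaces.
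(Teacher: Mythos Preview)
Your proposal is correct and matches the paper's approach exactly: the paper does not give a separate proof of this proposition, but rather sets up precisely the composite-functor framework you describe in the paragraphs preceding the statement (the two functors, the natural isomorphism identifying their composite with $\Cohom_H(M,-)$, and the fact that $F$ sends injectives to injectives), then invokes \cite{CAW:HomAlg}*{section~5.8}. Your write-up is in fact more detailed than the paper's, particularly in spelling out the identification $R^tF=\Cotor_K^t(\k,-)$ and the K\"unneth-style argument for the trivial-$K$-comodule case; the paper leaves both of these implicit.
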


There is another spectral sequence that
we will use whose construction requires
that one of the Hopf algebras involved
is a $P_*$-algebra. The reason for this
is discussed in Remark~\ref{rem:comodules-modules}:
in the category of finite type connected
comodules, extended comodules are projective
objects.

\begin{prop}\label{prop:CESS-2}
Assume that $H $ and $K\backslash\!\backslash H$
are $P_*$-algebras. Let~$M$ be a left\/
$H $-comodule which admits a projective
resolution and let $N$ be a left\/
$K\backslash\!\backslash H$-comodule.
Then there is a first quadrant cohomologically
indexed spectral sequence with
\[
\mathrm{E}_2^{s,t} =
\Coext_{K\backslash\!\backslash H}^s(\Cotor_K^t(\k,M),N)
\Lra
\Coext_H ^{s+t}(M,N).
\]
If $M$ is a trivial\/ $K$-comodule then
\[
\mathrm{E}_2^{s,t} \iso
\Coext_{K\backslash\!\backslash H}^s
(\Cotor_K^t(\k,\k)\overset{K\backslash\!\backslash H}\wedge M,N).
\]
\end{prop}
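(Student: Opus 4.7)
The plan is to realise this spectral sequence as the Grothendieck composite functor (Cartan--Eilenberg) spectral sequence for the adjunction
\[
\Cohom_H(M,N)\iso\Cohom_{K\backslash\!\backslash H}(\k\square_K M,N),
\]
obtained by combining \eqref{eq:Cohom-Cotor-1} (taking $A_*=H$ and $B_*=K$) with the identification \eqref{eq:Comodtwisting-Sigma}. This exhibits $\Cohom_H(-,N)$ as the composite $G\circ F$ of the covariant left-exact functor $F\:M\mapsto\k\square_K M$ from $\Comod_H$ to $\Comod_{K\backslash\!\backslash H}$, whose right derived functors are $\Cotor_K^t(\k,-)$, with the contravariant left-exact functor $G\:X\mapsto\Cohom_{K\backslash\!\backslash H}(X,N)$, whose right derived functors $\Coext_{K\backslash\!\backslash H}^s(-,N)$ are computed via projective resolutions in the first variable. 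The composite is contravariant in $M$, and $\Coext_H^*(M,N)$ is exactly its right derived functor, computed using the hypothesised projective resolution $P_\bullet\to M$.

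The two acyclicity conditions to verify on $P_\bullet$ are standard. A projective $H$-comodule has the form $P=H\otimes W$, and
\[
F(P)=\k\square_K(H\otimes W)\iso(\k\square_K H)\otimes W=(K\backslash\!\backslash H)\otimes W,
\]
a cofree $K\backslash\!\backslash H$-comodule; because both $H$ and $K\backslash\!\backslash H$ are $P_*$-algebras, Remark~\ref{rem:comodules-modules} identifies cofree with projective on finite type bounded below comodules, so $F(P)$ is $G$-acyclic. The Milnor--Moore theorem gives freeness of $H^*$ over $K^*$, dualising to cofreeness of $H$ as a $K$-comodule, whence $\Cotor_K^t(\k,H)=0$ for $t>0$; since the $K$-coaction on $H\otimes W$ lies on the first factor, $\Cotor_K^t(\k,P)=0$ for $t>0$ also. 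The standard Cartan--Eilenberg machinery (\cite{CAW:HomAlg}*{section~5.8}) then produces the $E_2$-page and abutment as stated, entirely parallel to but dual to the argument sketched for Proposition~\ref{prop:CESS-1}.

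For the refinement when $M$ is trivial as a $K$-comodule, its $H$-coaction factors through the inclusion $K\backslash\!\backslash H\hookrightarrow H$, making $M$ naturally a $K\backslash\!\backslash H$-comodule. Choosing any injective resolution $\k\to J^\bullet$ over $K$, the complex $J^\bullet\otimes M$ with $K$-coaction only on the first factor is an injective resolution of $M$ over $K$, and applying $\k\square_K$ gives
\[
\Cotor_K^t(\k,M)\iso\Cotor_K^t(\k,\k)\otimes M;
\]
combining the natural $K\backslash\!\backslash H$-coaction on $\Cotor_K^t(\k,\k)$ with the original $K\backslash\!\backslash H$-coaction on $M$ as the diagonal coaction gives the identification $\Cotor_K^t(\k,\k)\overset{K\backslash\!\backslash H}\wedge M$.

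The main obstacle, and the reason this proposition is split off from Proposition~\ref{prop:CESS-1}, is the projective resolution hypothesis on $M$: arbitrary $H$-comodules need not admit projective resolutions, so one typically invokes Proposition~\ref{prop:CoherentComod-ProjResn} (coherent $M$) to supply one, leaning on the $P_*$-algebra hypothesis on $H$ and the duality of Remark~\ref{rem:comodules-modules} to pass from the injective resolutions guaranteed by Proposition~\ref{prop:Coherent-InjResn} on the module side back to projective resolutions on the comodule side.
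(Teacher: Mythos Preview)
Your proof takes essentially the same route as the paper's: factor $\Cohom_H(-,N)$ through $K\backslash\!\backslash H\square_H(-)\iso\k\square_K(-)$, check that this sends cofree $H$-comodules (which are simultaneously injective and projective by the $P_*$-algebra hypothesis and Remark~\ref{rem:comodules-modules}) to cofree $K\backslash\!\backslash H$-comodules, and then invoke the Grothendieck/Cartan--Eilenberg machinery. You supply considerably more detail than the paper---the explicit identification $F(H\otimes W)=(K\backslash\!\backslash H)\otimes W$, the Milnor--Moore argument that projectives are $F$-acyclic, and the treatment of the trivial-$K$-comodule refinement---but the underlying strategy is identical.
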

\begin{proof}
The construction is similar to the other one,
and involves expressing $\Cohom_H (-,N)$
as a composition
\[
\Cohom_{K\backslash\!\backslash H}(-,N)\circ (K\backslash\!\backslash H\square_H (-))
=
\Cohom_{K\backslash\!\backslash H}(K\backslash\!\backslash H\square_H (-),N)
\iso \Cohom_H (-,N).
\]
The functor $K\backslash\!\backslash H \square_H (-)$
sends injective $H$-comodules to projective
objects in $\Comod_{K\backslash\!\backslash H }$
(see Remark~\ref{rem:comodules-modules}).
Therefore the standard construction can be
applied.
\end{proof}

Of course the assumption that $M$ admits a
projective resolution is crucial; in the
case of $P_*$-algebras this amounts to
working with coherent comodules.

Unfortunately, these spectral sequences
are not sufficient for our purposes and
we also need to dualise and use a classical
Cartan-Eilenberg spectral sequence
\cite{HC&SE:HomAlg}*{theorem~6.1(1)} for
a normal sequence of $\k$-algebras
\begin{equation}\label{eq:CESS-algebraseq}
R\to S \to S/\!/R
\end{equation}
together with a left $S/\!/R$-module~$L$
and a left $S$-module~$M$. This spectral
sequence has the form
\begin{equation}\label{eq:CESS-algebras}
\mathrm{E}_2^{s,t} =
\Ext^s_{S/\!/R}(L,\Ext^t_{R}(\k,M))
\Lra \Ext^{s+t}_{S}(L,M),
\end{equation}
where we have suppressed internal gradings.
In our applications~\eqref{eq:CESS-algebraseq}
will be a sequence of cocommutative Hopf
algebras.

\section{The Steenrod algebra and its dual}
\label{sec:StA}

The theory of \Palgebras{} applies to many
situations involving subHopf algebras of the
Steenrod algebra for a prime. Of course the
change of rings isomorphism of Proposition~\ref{prop:ExtA/B}
holds for any subalgebra where~$A$ is flat
over~$B$, but the vanishing will not be true
in general (for example, when $B=A(n)$).

To illustrate this, here is a simple application
involving the mod~$2$ Steenrod algebra; this
result appears in~\cite{DCR:Localn}*{corollary~4.10}.
We denote the mod~$2$ Eilenberg-Mac~Lane
spectrum by~$H=H\F_2$.
\begin{prop}\label{prop:H->BP}
The $2$-completed $BP$-cohomology of $H$ is
trivial, i.e., $(BP\sphat_2)^*(H)=0$.
\end{prop}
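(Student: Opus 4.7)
The plan is to apply the classical mod~$2$ Adams spectral sequence and reduce the computation to the change of rings vanishing of Proposition~\ref{prop:ExtA/B}. The relevant spectral sequence converges to $[H,BP\sphat_2]_{t-s}=(BP\sphat_2)^{s-t}(H)$ with
\[
\mathrm{E}_2^{s,t}=\Ext_{\StA}^{s,t}\bigl(H^*(BP\sphat_2;\F_2),\,H^*(H;\F_2)\bigr),
\]
and the goal is to show that this $E_2$ page is identically zero.

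First I would collect the standard input from topology: $H^*(H;\F_2)=\StA$ as a left $\StA$-module, and $H^*(BP;\F_2)\iso\StA\otimes_E\F_2=\StA/\!/E$, where $E\subseteq\StA$ is the exterior subHopf algebra generated by the Milnor primitives $Q_0,Q_1,\ldots$. Since $BP$ is connective of finite type, $2$-completion leaves its mod~$2$ cohomology unchanged, so the same identification holds for $H^*(BP\sphat_2;\F_2)$. One then observes that the filtration $E=\bigcup_n E(Q_0,\ldots,Q_n)$ exhibits $E$ as a \Palgebra{} in the sense of Section~\ref{sec:P-algebras}, each finite exterior algebra $E(Q_0,\ldots,Q_n)$ being a Poincar\'e duality subHopf algebra of the next.

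With this in hand, the main step is a one-line invocation of Proposition~\ref{prop:ExtA/B} with $A=\StA$ and $B=E$:
\[
\Ext_{\StA}^{*,*}(\StA\otimes_E\F_2,\,\StA)\iso\Ext_E^{*,*}(\F_2,\StA)=\Hom_E(\F_2,\StA)=0,
\]
the two isomorphisms using freeness of $\StA$ over $E$ (Milnor--Moore) and the second factor being the finite $E$-module $\F_2$ mapping into the bounded-below free $E$-module $\StA$. This identifies the $E_2$ page as zero, so $\mathrm{E}_\infty=0$ and the conclusion $(BP\sphat_2)^*(H)=0$ follows from strong convergence of the spectral sequence.

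The only subtlety I anticipate is justifying convergence in this form, since $H$ is not a finite spectrum. However, $H^*(H;\F_2)=\StA$ is bounded below and of finite type while $BP\sphat_2$ is connective of finite type, which puts us squarely in the setting where the standard convergence theorems for the mod~$2$ Adams spectral sequence apply, so no further work is needed here.
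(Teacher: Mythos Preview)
Your proof is correct and is essentially the paper's own argument: both set up the Adams spectral sequence, identify the (co)homology of $H$ and $BP$, and apply the change of rings vanishing of Proposition~\ref{prop:ExtA/B} with the pair $E\subseteq\StA$. The only cosmetic difference is that you work on the cohomology/module side with $\Ext_{\StA}(\StA/\!/E,\StA)$, whereas the paper dualises to homology/comodules and writes $\Coext_{\StA_*}(\StA_*,\StA_*\square_{\StE_*}\F_2)$; these are the same computation by the duality in Remark~\ref{rem:comodules-modules}.
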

\begin{proof}
There is an Adams spectral sequence of form
\[
\mathrm{E}_2^{s,t} = \Coext_{\StA_*}^{s,t}(H_*(H),H_*(BP))
\Lra (BP\sphat_2)^{s-t}(H).
\]
Now $H_*(H)=\StA_*$ and it is well-known
that as $\StA_*$-comodule algebras,
\begin{equation}\label{eq:H*BP}
H_*(BP)
\iso \StA_*\square_{\StE_*}\F_2
= \StA_*^{(1)}
= \F_2[\zeta_1^2,\ldots,\zeta_n^2,\ldots]
\end{equation}
where
$\StE_* = \StA_*/\!/\StA_*^{(1)}$ which
is an infinitely generated exterior Hopf
algebra so it is a $P_*$-algebra{}. Now
by a change of rings isomorphism and
Proposition~\ref{prop:ExtA/B},
\[
\mathrm{E}_2^{s,t} =
\Coext_{\StA_*}^{s,t}(\StA_*,\StA_*\square_{\StE_*}\F_2)
\iso
\Coext_{\StE_*}^{s,t}(\StA_*,\F_2)
= 0.
\qedhere
\]
\end{proof}

\subsection*{Doubling}

The operation of \emph{doubling} has been
used frequently in studying $\StA$-modules.
The reader is referred to the account of
Margolis~\cite{HRM:Book}*{section~15.3}
which we will use as background.

Since the dual $\StA_*$ is a commutative
Hopf algebra, it admits a Frobenius
endomorphism $\StA_*\to\StA_*$ which
doubles degrees and has Hopf algebra
cokernel
\[
\mathcal{E}_* = \StA_*/\!/\StA_*^{(1)}
= \Lambda_{\F_2}(\bar{\zeta}_s : s\geq1),
\]
where $\StA_*^{(1)}=\F_2[\zeta_s^2 : s\geq1 ]$.
Dually, there is a Verschiebung $\StA\to\StA$
which halves degrees and satisfies
\[
\Sq^r\mapsto
\begin{dcases*}
\Sq^{r/2} & if $r$ is even, \\
\ph{\;\;\;}0 & if $r$ is odd.
\end{dcases*}
\]
The kernel of this Verschiebung is the
ideal generated by the Milnor primitives
$\mathrm{P}^0_t$ ($t\geq1$), hence there
is a grade-halving isomorphism of Hopf
algebras $\StA/\!/\mathcal{E}\xrightarrow{\iso}\StA$,
where $\mathcal{E}\subseteq\StA$ is the
subHopf algebra generated by the primitives
$\mathrm{P}^0_t$ and dual to the exterior
quotient Hopf algebra $\mathcal{E}_*$.

Given a left (graded) $\StA$-module~$M$, we
can induce an $\StA/\!/\mathcal{E}$-module
$M_{(1)}$ where
\[
M_{(1)}^n =
\begin{dcases*}
M^{n/2} & if $n$ is even, \\
0 & if $n$ is odd,
\end{dcases*}
\]
and we write $x_{(1)}$ to indicate the
element $x\in M$ regarded as an element
of $M_{(1)}$; the module structure is
given by
\[
\Sq^r(x_{(1)}) =
\begin{dcases*}
(\Sq^{r/2}x)_{(1)} & if $r$ is even, \\
0 & if $r$ is odd,
\end{dcases*}
\]

Using this construction, the category of
left $\StA$-modules $\Mod_{\StA}$ admits
an additive functor to the category of
evenly graded $\StA/\!/\mathcal{E}$-modules,
\[
\Phi\:
\Mod_{\StA} \to
\Mod_{\StA/\!/\mathcal{E}}^\ev;
\quad
M\mapsto M_{(1)}
\]
which is an isomorphism of categories.
The quotient homomorphism $\rho\:\StA\to\StA/\!/\mathcal{E}$
also induces an additive isomorphism of categories
$\rho^*\:\Mod_{\StA/\!/\mathcal{E}}^\ev
\to\Mod_{\StA}^\ev$
and it is often useful to consider the
composition
$\rho^*\circ\Phi\:\Mod_{\StA}
\to \Mod_{\StA}^\ev$.

By iterating $\Phi^{(1)} = \Phi$ we obtain
isomorphisms
\[
\Phi^{(s)} = \Phi\circ\Phi^{(s-1)}\:\Mod_{\StA}
\to
\Mod_{\StA/\!/\StE^{(s)}}^{(s)};
\quad
M\mapsto M_{(s)}
\]
where the codomain is the category of
$\StA/\!/\StE^{(s)}$-modules concentrated
in degrees divisible by~$2^s$ and
$\StE^{(s)}\subseteq\StA$ is the subHopf
algebra multiplicatively generated by the
elements
\[
\mathrm{P}^a_b \quad (s\geq a\geq0,\;b\geq1).
\]

By doubling all three of the variables involved
the following homological result is immediate
for $e\geq1$ and two $\StA$-modules~$M,N$:
\begin{align}\label{eq:Doubling-iterated-Ext}
\Ext_{\StA_{(e)}}^{s,2^et}(M_{(e)},N_{(e)})
\iso \Ext_\StA^{s,t}(M,N).
\end{align}

Because doubling is induced using a grade
changing Hopf algebra endomorphism, the
double $\StA_{(1)}$ is also a Hopf algebra
isomorphic to the quotient Hopf algebra
$\StA/\!/\mathcal{E}$ and dual to the
subHopf algebra of squares
$\StA^{(1)}_*\subseteq\StA_*$ which is
also given by
\[
\StA^{(1)}_*
= \StA_*\square_{\StA_*/\!/\StA^{(1)}_*}\F_2
= \F_2\square_{\StA_*/\!/\StA^{(1)}_*}\StA_*
= (\StA_*/\!/\StA^{(1)}_*)\backslash\!\backslash\StA_*.
\]
More generally, for any $s\geq1$, $\StA_{(s)}$
is isomorphic to the quotient Hopf algebra
of~$\StA/\!/\StE^{(s)}$ dual to the subalgebra
of $2^s$-th powers
\[
\StA^{(s)}_*
= (\StA_*/\!/\StA^{(s)}_*)\backslash\!\backslash\StA_*
\subseteq\StA_*.
\]

In many ways, doubling is more transparent
when viewed in terms of comodules. For
an $\StA_*$-comodule $M_*$, we can define
a $\StA_*^{(1)}$-coaction
$\mu^{(1)}\colon M_*^{(1)}\to\StA_*^{(1)}\otimes M_*^{(1)}$
where $M_*^{(1)}$ denotes $M_*$ with its
degrees doubled; this is given on elements
by the composition
\[
\xymatrix{
M_*\ar[rr]_(.45)\mu \ar@/^15pt/[rrrr]^{\mu^{(1)}}
&& \StA_*\otimes M_*\ar[rr]_(.45){(-)^2\otimes\Id}
&& \StA_*^{(1)}\otimes M_*.
}
\]
By iterating we also obtain a $\StA_*^{(s)}$-coaction
$\mu^{(s)}\colon M_*^{(s)}\to\StA_*^{(s)}\otimes M_*^{(s)}$.

Then the comodule analogue of~\eqref{eq:Doubling-iterated-Ext}
is
\begin{align}\label{eq:Doubling-iterated-Coxt}
\Coext_{\StA_*^{(e)}}^{s,2^et}(M_*^{(e)},N_*^{(e)})
\iso \Coext_{\StA_*}^{s,t}(M_*,N_*).
\end{align}

We can use iterated doubling combined with
Proposition~\ref{prop:ExtA/B} to show that
for any $d\geq1$,
\begin{equation}\label{eq:Doubling}
\Coext_{\StA_*}^{s,t}(\StA_*,\StA_*^{(d)})
\iso \Ext_\StA^{s,t}(\StA_{(d)},\StA)
= 0.
\end{equation}
By doubling all three of the variables involved
here we can also prove that for $e\geq0$,
\begin{equation}\label{eq:Doubling-iterated-comodules}
\Coext_{\StA_*^{(e)}}^{s,2^et}(\StA_*^{(e)},\StA_*^{(d+e)})
\iso \Coext_{\StA_*}^{s,t}(\StA_*,\StA_*^{(d)})
= 0.
\end{equation}

We will use this to show that $(\MSp\sphat_2)^*(BP)=0$
and $(\MSp\sphat_2)^*(H)=0$ for example.

\subsection*{Some families of quotient $P_*$-algebras
of $\StA_*$}
We will begin by describing some quotients
of the dual Steenrod algebra~$\StA_*$. For
any $n\geq1$, $(\zeta_1,\ldots,\zeta_n)\lhd\StA_*$
is a Hopf ideal so there is a quotient Hopf
algebra $\StA_*/(\zeta_1,\ldots,\zeta_n)$
together with the subHopf algebra
\[
\StP(n)_*
= \StA_*\square_{\StA_*/(\zeta_1,\ldots,\zeta_n)}\F_2
= \F_2[\zeta_1,\ldots,\zeta_n]\subseteq\StA_*
\]
and in fact
\[
\StA_*/\!/\StP(n)_* = \StA_*/(\zeta_1,\ldots,\zeta_n).
\]

Similarly, for any $s\geq0$, the ideal
$(\zeta_1^{2^s},\ldots,\zeta_n^{2^s})\lhd\StA_*$
is a Hopf ideal and there is a quotient
Hopf algebra
\[
\StA_*/\!/\StP(n)^{(s)}_*
= \StA_*/(\zeta_1^{2^s},\ldots,\zeta_n^{2^s})
\]
with associated subHopf algebra
\[
\StP(n)^{(s)}_*
= \StA_*\square_{\StA_*/\!/\StP(n)^{(s)}_*}\F_2
= \F_2[\zeta_1^{2^s},\ldots,\zeta_n^{2^s}]\subseteq\StA_*.
\]
For each $t\geq0$ there is a finite quotient
Hopf algebra
\[
\StP(n)^{(s)}_*/
(\zeta_1^{2^{s+t}},\zeta_2^{2^{s+t-1}},\ldots,
\zeta_t^{2^{s+1}},\zeta_{t+1}^{2^{s}},\ldots,\zeta_n^{2^s})
\]
and we have
\[
\StP(n)^{(s)}_* =
\lim_t \StP(n)^{(s)}_*/
(\zeta_1^{2^{s+t}},\zeta_2^{2^{s+t-1}},\ldots,
\zeta_t^{2^{s+1}},\zeta_{t+1}^{2^{s}},\ldots,\zeta_n^{2^s})
\]
where the limit is computed degree-wise. The
graded dual Hopf algebra
\[
\StP(n)_{(s)} =
(\StP(n)_*^{(s)})^*=\Hom(\StP(n)^{(s)}_*,\F_2)
\]
is the colimit of the finite dual Hopf algebras
\[
\Hom(\StP(n)^{(s)}_*/
(\zeta_1^{2^{s+t}},\zeta_2^{2^{s+t-1}},\ldots,
\zeta_t^{2^{s+1}},\zeta_{t+1}^{2^{s}},\ldots,\zeta_n^{2^s}),\F_2),
\]
i.e.,
\[
\StP(n)_{(s)} = \colim_t\Hom(\StP(n)^{(s)}_*/
(\zeta_1^{2^{s+t}},\zeta_2^{2^{s+t-1}},\ldots,
\zeta_t^{2^{s+1}},\zeta_{t+1}^{2^{s}},\ldots,\zeta_n^{2^s}),\F_2).
\]
Therefore $\StP(n)_{(s)}$ is a \Palgebra{} and
$\StP(n)^{(s)}_*$ is a $P_*$-algebra.


\section{Recollections on Ravenel's proof}
\label{sec:DCRproof}

We will assume the reader is familiar with
the strategy behind Ravenel's proof
of~\cite{DCR:Localn}*{theorem~3.9}. Since
we will be working in a very similar
situation with ring spectra which have
torsion free homology the main steps
will be applicable.

Following Ravenel~\cite{DCR:Localn}*{section~3},
we recall that there are compatible double
loop maps $\Omega\SU(2^s)\to B\U$ defined
using Bott's weak equivalence $\Omega\SU\to B\U$.
The corresponding virtual complex bundle on
$\Omega\SU(2^s)$ has Thom spectrum~$X_s$
which is an $\mathcal{E}_2$ ring spectrum.
Ravenel shows that
\[
\langle S^0\rangle=\langle X_1\rangle
> \langle X_2\rangle > \langle X_3\rangle
> \cdots > \langle X_s\rangle
> \langle X_{s+1}\rangle
> \cdots > \langle \MU \rangle.
\]
Of course, locally at the prime~$2$,
$\langle\MU\rangle=\langle BP\rangle$.

We are lead to consider the question:
How is $\langle\MSp\rangle$ related to
$\langle\MU\rangle$, and locally at~$2$,
to $\langle BP\rangle$? Since there are
maps of ring spectra $\MSp\to\MU\to BP$,
$\langle\MSp\rangle\geq\langle\MU\rangle=\langle BP\rangle$.
We will show that $\langle\MSp\rangle>\langle BP\rangle$
and also find an analogue of the sequence
of spectra~$X_s$ for~$\MSp$.

\begin{rem}\label{rem:RavenelProof}
Here are observations on the proof of theorem~3.1.

Rather than taking $M=M(4)$ to be the
mod~$4$ Moore spectrum we suggest using
the ring spectrum $M=F(M(2),M(2))$ (it is even
an $A_\infty$ ring spectrum in a canonical way).
Then $\langle M\rangle\geq\langle M(2)\rangle$
since the Moore spectrum $M(2)=S^0\cup_2e^1$
is an $M$-module spectrum, while
$\langle M(2)\rangle\geq\langle M\rangle$
since $M\sim M(2)\wedge DM(2)$. Therefore
$\langle M\rangle=\langle M(2)\rangle$.
Similar observations apply to the odd
prime case.

Also, for (a) and (b) the following observation
seems sufficient. Since
\begin{align*}
(X\wedge M)\wedge Z\sim * \nsim (Y\wedge M)\wedge Z
\quad&\Lra\quad
X\wedge(Z\wedge M)\sim * \nsim Y\wedge(M\wedge Z),  \\
\intertext{we have}
\langle Y\wedge M\rangle > \langle X\wedge M\rangle
\quad&\Lra\quad
\langle Y\rangle > \langle X\rangle.
\end{align*}
\end{rem}

\section{Some Thom spectra on loop spaces}
\label{sec:MSp-Y}

For background to this discussion see Mimura
\& Toda~\cite{MM&HT:TopLieGps}*{Chapter~IV,~\S6}.
The sequence of spaces making up the spectrum
of connected real $K$-theory include
\[
\underline{\kO}_4 \sim B\Sp,
\quad
\underline{\kO}_5 \sim \SU/\Sp,
\]
so there is a weak equivalence of infinite
loop spaces $\Omega\SU/\Sp\sim B\Sp$. At
the finite level we have compatible loop
maps
\[
\Omega\SU(2n)/\Sp(n)
\to \Omega\SU(2n+2)/\Sp(n+1)
\to \cdots \to B\Sp
\]
which define virtual symplectic bundles
on these spaces and we will denote the
Thom spectrum over $\Omega\SU(2^{s+1})/\Sp(2^s)$
by~$Y_s$. There is a map of $\mathcal{E}_1$
ring spectra $Y_s\to\MSp$ which induces
an injective ring homomorphism in
homology.

There are also compatible loop maps
$\Omega\SU(2n)\to\Omega\SU(2n)/\Sp(n)$,
and by pulling back from $\Omega\SU(2n)/\Sp(n)$
we obtain $\mathcal{E}_1$ ring spectra~$Y'_s$
over the $\Omega\SU(2^{s+1})$ together
with $\mathcal{E}_1$ maps $Y'_s\to Y_s$.

The mod~$2$ homology of $Y_s$ is given
by
\[
H_*(Y_s) = \F_2[y_1,y_2,\ldots,y_{2^s-1}],
\]
where $|y_k|=4k$. The induced homomorphism
$H_*(Y_s)\to H_*(\MSp)$ is actually an
isomorphism up to degree~$2^{s+2}-4$,
so we can choose the generators~$y_k$
so that they map to polynomial generators
of $H_*(\MSp)$. In particular, we assume
that each $y_{2^r-1}$ maps to the element
corresponding to the primitive
generator in $H_{2^{r+2}-4}(B\Sp_+)$
under the Thom isomorphism
$H_*(\MSp)\xrightarrow{\iso}H_*(B\Sp_+)$.
It is well known that the coaction
on these elements satisfies
\begin{equation}\label{eq:y(2^r-1)coaction}
\psi y_{2^r-1} =
\sum_{0\leq k\leq r}\zeta_k^4\otimes y_{2^{r-k}-1}^{2^{k}}.
\end{equation}

Consider the ideal
\[
J_s = (y_1,y_3,\ldots,y_{2^s-1}) \lhd H_*(Y_s).
\]
Then \eqref{eq:y(2^r-1)coaction} implies
that this is a subcomodule ideal, hence
the quotient ring $H_*(Y_s)/J_s$ is an
$\StA_*$-comodule algebra.

We will make use of the $\StA_*$-comodule
subalgebra
\[
\StP(s)^{(2)}_*=\F_2[\zeta_1^4,\zeta_2^4,\ldots,\zeta_s^4]\subseteq\StA_*
\]
and the quotient Hopf algebra $\StA_*/\!/\StP(s)^{(2)}_*$.

\begin{prop}\label{prop:ComoduleAlgebra}
The composition
\[
\xymatrix{
H_*(Y_s)\ar[r]^(.42)\psi\ar@/_15pt/[rr]^{\ph{F}}
& \StA_*\otimes H_*(Y_s)\ar[r]
& \StA_*\square_{\StA_*/\!/\StP(s)^{(2)}_*} H_*(Y_s)/J_s \\
}
\]
is an isomorphism of $\StA_*$-comodule
algebras, therefore
\[
H_*(Y_s) \iso
(\StA_*\square_{\StA_*/\!/\StP(s)^{(2)}_*}\F_2)
          \otimes H_*(Y_s)/J_s
\iso
\StP(s)^{(2)}_*\otimes H_*(Y_s)/J_s
\]
where $H_*(Y_s)/J_s$ has trivial coaction.
\end{prop}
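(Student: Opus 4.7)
My plan is to identify the cotensor product in the statement with $\StP(s)^{(2)}_*\otimes H_*(Y_s)/J_s$, check that the displayed composition is an $\StA_*$-comodule algebra homomorphism, and then establish that it is an isomorphism using a Poincar\'e-series match together with a triangularity argument read off from~\eqref{eq:y(2^r-1)coaction}.

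Since $J_s$ is a subcomodule ideal, $H_*(Y_s)/J_s$ inherits an $\StA_*$-coaction, and composing with the Hopf algebra quotient $\pi\:\StA_*\to\StA_*/\!/\StP(s)^{(2)}_*$ induces a $\StA_*/\!/\StP(s)^{(2)}_*$-coaction on $H_*(Y_s)/J_s$. The key structural input is that this induced coaction is \emph{trivial}: the dyadic generators $y_{2^r-1}$ vanish in $H_*(Y_s)/J_s$ anyway, and the remaining polynomial generators $y_k$ (with $k\ne 2^r-1$) have the property that every term in $\psi y_k$ whose $\StA_*$-factor is not killed by $\pi$ has its second factor already in $J_s$. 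Granting this,
\[
\StA_*\square_{\StA_*/\!/\StP(s)^{(2)}_*} H_*(Y_s)/J_s
\iso (\StA_*\square_{\StA_*/\!/\StP(s)^{(2)}_*}\F_2)\otimes H_*(Y_s)/J_s
= \StP(s)^{(2)}_*\otimes H_*(Y_s)/J_s.
\]
Coassociativity of $\psi$ shows the displayed composition factors through the cotensor product, and since $\psi$ is a coalgebra map and the quotient $H_*(Y_s)\to H_*(Y_s)/J_s$ is an algebra map, the result is a homomorphism of $\StA_*$-comodule algebras.

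For the isomorphism I would compare Poincar\'e series. Using $|y_k|=4k$ and $|\zeta_j^4|=4(2^j-1)$,
\[
\prod_{k=1}^{2^s-1}\frac{1}{1-t^{4k}}
=\prod_{j=1}^{s}\frac{1}{1-t^{4(2^j-1)}}\cdot\prod_{\substack{1\le k\le 2^s-1\\ k\ne 2^r-1}}\frac{1}{1-t^{4k}},
\]
so $P(H_*(Y_s))=P(\StP(s)^{(2)}_*)\cdot P(H_*(Y_s)/J_s)$ and it suffices to check surjectivity. Reading~\eqref{eq:y(2^r-1)coaction} off modulo $J_s$, the $k=0$ term and the terms with $1\le k\le r-1$ all disappear (as $y_{2^r-1}$ and each $y_{2^{r-k}-1}^{2^k}$ lie in $J_s$), leaving $y_{2^r-1}\mapsto \zeta_r^4\otimes 1$. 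For $y_k$ with $k\ne 2^r-1$ the image is $1\otimes\overline{y_k}$ plus corrections in $(\StP(s)^{(2)}_*)^+\otimes H_*(Y_s)/J_s$ involving the $\zeta_j^4$'s. A triangular induction on filtration degree in the left factor then recovers each $\zeta_r^4\otimes 1$ and $1\otimes\overline{y_k}$ in the image, giving surjectivity and hence the isomorphism.

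The step I expect to be most delicate is the structural assertion that the $\StA_*$-coaction on the non-dyadic generators $y_k$ takes values in $\StP(s)^{(2)}_*\otimes H_*(Y_s)/J_s$ modulo $J_s$. This is what makes the induced $\StA_*/\!/\StP(s)^{(2)}_*$-coaction on $H_*(Y_s)/J_s$ trivial, and is dual to the well-known statement that $\Sq^i$ with $i\not\equiv 0\pmod 4$ acts trivially modulo decomposables on the image of $H^*(B\Sp)\to H^*(\MSp)$; it would be checked by combining the known structure of the Milnor coaction on $H_*(\MSp)$ with the Thom isomorphism $H_*(\MSp)\iso H_*(B\Sp_+)$ restricted to $Y_s$.
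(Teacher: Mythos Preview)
The paper states this proposition without proof, treating it as a finite-level truncation of the well-known decomposition~\eqref{eq:H*MSp} of $H_*(\MSp)$; there is no written argument to compare against.

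Your outline is sound. The coassociativity argument for landing in the cotensor product is correct, the Poincar\'e-series match is immediate, and the triangular surjectivity check from~\eqref{eq:y(2^r-1)coaction} (giving $y_{2^r-1}\mapsto\zeta_r^4\otimes1$ modulo~$J_s$) works as described. You have also correctly isolated the one step with real content: triviality of the induced $\StA_*/\!/\StP(s)^{(2)}_*$-coaction on $H_*(Y_s)/J_s$. Your pointer via the Thom isomorphism and the Steenrod action on $H^*(B\Sp)$ is the standard route; just note that this is not a pure degree argument---concentration of $H_*(Y_s)$ in degrees divisible by~$4$ only forces the coaction into $\bigoplus_{4\mid i}\StA_i\otimes H_*(Y_s)$, and for instance $\zeta_1\zeta_2\in\StA_4\setminus\StA_*^{(2)}$---so the symplectic input really is needed, exactly as you say.
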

In the limit this coincides with well-known
isomorphisms of $\StA_*$-comodule algebras
\begin{multline}\label{eq:H*MSp}
H_*(\MSp) \xrightarrow{\iso}
\StA_*\square_{\StA_*/\!/\StA_*^{(2)}} H_*(\MSp)/J
\\
\xrightarrow{\iso}
(\StA_*\square_{\StA_*/\!/\StA_*^{(2)}}\F_2)\otimes H_*(\MSp)/J
\xrightarrow{\iso}
\StA_*^{(2)} \otimes H_*(\MSp)/J
\end{multline}
where $J\lhd H_*(\MSp)$ is the comodule
ideal generated by the images of all
the~$y_{2^r-1}$.

Of course the quotient Hopf algebra
\[
\StA_*/\!/\StA_*^{(2)}
=
\F_2[\zeta_1,\zeta_2,\ldots]/(\zeta_1^4,\zeta_2^4,\ldots,\zeta_s^4)
\]
is also infinite dimensional and its dual
is the cyclic $\StA$-module $\StA_{(2)}=\StA/\!/\StE(2)$
where $\StE(2)\subseteq\StA$ is the subHopf
algebra generated by the elements~$\mathrm{P}^k_t$
($k\geq2$, $t\leq s$) in the notation
of~\cite{HRM:Book}*{section~XV.1}.

\section{Comparison of some Bousfield classes}
\label{sec:BousfieldClasses}

Now we can give our main topological results.
From now on we will work $2$-locally and omit
reference to this in notation, etc.

\begin{thm}\label{thm:Main}
The following inequalities for Bousfield
classes are satisfied:
\[
\langle S^0\rangle=\langle Y_1\rangle
> \langle Y_2\rangle
> \cdots > \langle Y_s\rangle
> \langle Y_{s+1}\rangle
> \cdots > \langle \MSp \rangle > \langle BP\rangle.
\]
\end{thm}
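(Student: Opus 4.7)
The proof closely follows the template of~\cite{DCR:Localn}*{theorem~3.9}, adapted to the tower $\{Y_s\}$ in place of $\{X_s\}$, with the new separation $\langle\MSp\rangle>\langle BP\rangle$ as its main additional ingredient. All of the non-strict inequalities $\langle Y_s\rangle\geq\langle Y_{s+1}\rangle\geq\langle\MSp\rangle\geq\langle BP\rangle$ are immediate from the canonical maps of $\mathcal{E}_1$ ring spectra $Y_s\to Y_{s+1}\to\cdots\to\MSp\to\MU\to BP$ (the last after $2$-localization). The equality $\langle S^0\rangle=\langle Y_1\rangle$ is obtained by the same argument Ravenel uses at the bottom of the $X_s$-tower, exploiting that $Y_1$ is a connective $\mathcal{E}_1$ ring spectrum with nonzero $\pi_0$ whose unit $S^0\to Y_1$ detects enough structure.

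To promote each $\langle Y_s\rangle\geq\langle Y_{s+1}\rangle$ to a strict inequality, for each $s$ I would construct a finite witness $Z_s$ with $Y_s\wedge Z_s\nsim\ast$ but $Y_{s+1}\wedge Z_s\sim\ast$. Guided by Remark~\ref{rem:RavenelProof} I take $Z_s=G_s\wedge M$ with $M=F(M(2),M(2))$ (so that $\langle M\rangle=\langle M(2)\rangle$) and $G_s$ a finite spectrum chosen to pair with the generator $y_{2^{s+1}-1}\in H_*(\MSp)$. The vanishing $Y_{s+1}\wedge Z_s\sim\ast$ is then checked by running the mod-$2$ Adams spectral sequence, whose $\mathrm{E}_2$-page is
\[
\mathrm{E}_2^{*,*}=\Coext_{\StA_*}^{*,*}\bigl(H_*(Z_s),H_*(Y_{s+1})\bigr).
\]
Using the $\StA_*$-comodule splitting of Proposition~\ref{prop:ComoduleAlgebra} together with the adjunction~\eqref{eq:Cohom-Cotor-2}, this $\mathrm{E}_2$ reduces to a $\Coext$ over the $P_*$-algebra $\StP(s+1)^{(2)}_*$, and the required vanishing follows from Proposition~\ref{prop:CoextA/B} (dualising the $P$-algebra input of Proposition~\ref{prop:Palg-finite}). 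The same machinery applied to $Y_s$ leaves a surviving class because $\StP(s)^{(2)}_*$ does not absorb $y_{2^{s+1}-1}$.

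For the main new inequality $\langle\MSp\rangle>\langle BP\rangle$, the strategy exploits the mismatch between the two comodule descriptions: \eqref{eq:H*BP} shows that $H_*(BP)\iso\StA_*^{(1)}$ is controlled by squares of the $\zeta_i$, while \eqref{eq:H*MSp} shows that $H_*(\MSp)\iso\StA_*^{(2)}\otimes H_*(\MSp)/J$ is controlled by fourth powers. The iterated doubling vanishing~\eqref{eq:Doubling-iterated-comodules} with $e=d=1$, which amounts to $\Coext^{*,*}_{\StA_*^{(1)}}(\StA_*^{(1)},\StA_*^{(2)})=0$, furnishes a witness $W$ (constructed as a telescope of Postnikov sections of $\MSp$ smashed with $M$) whose $BP$-based Adams $\mathrm{E}_2$ vanishes identically via the $\StA_*^{(1)}$ change-of-rings, while its $\MSp$-based $\mathrm{E}_2$ retains a permanent cycle detected by an element of $\StA_*^{(2)}$ not in the image of $\StA_*^{(1)}$. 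Hence $W\wedge BP\sim\ast$ yet $W\wedge\MSp\nsim\ast$.

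The chief technical obstacles are (i) choosing the witness complexes $Z_s$ and $W$ concretely enough to identify their mod-$2$ homologies as $\StA_*$-comodules and to control higher Adams differentials; and (ii) establishing the non-acyclicity assertions, since the change-of-rings arguments only yield acyclicity on one side. For (ii) the essential input is the nontriviality of the primitive classes $y_{2^r-1}\in H_*(\MSp)$ arising from the generators of $H_{4(2^r-1)}(B\Sp_+)$ under the Thom isomorphism, which survive to detect non-trivial elements of $\pi_*\MSp$.
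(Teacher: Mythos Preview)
Your overall template---invoke Ravenel's reduction and then show the relevant Adams $\mathrm{E}_2$-terms vanish---matches the paper, but the algebraic core of your argument misfires in the change-of-rings step, and this is where the paper's proof actually lives.

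The adjunction~\eqref{eq:Cohom-Cotor-2} moves $\Coext$ from $\StA_*$ to the \emph{quotient} Hopf algebra $B_*$ appearing in $\StA_*\square_{B_*}(-)$, not to the sub-Hopf algebra. Thus, using Proposition~\ref{prop:ComoduleAlgebra}, the Adams $\mathrm{E}_2$ for $Y_{s+1}$ reduces to $\Coext$ over $\StA_*/\!/\StP(s+1)^{(2)}_*$, \emph{not} over $\StP(s+1)^{(2)}_*$; and for $\MSp$ versus $BP$ it reduces to $\Coext_{\StA_*/\!/\StA_*^{(2)}}^{*,*}(\StA_*^{(1)},\F_2)$, not to the $\Coext_{\StA_*^{(1)}}$ group in~\eqref{eq:Doubling-iterated-comodules}. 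There is no one-step ``$\StA_*^{(1)}$ change-of-rings'' of the sort you invoke, and Proposition~\ref{prop:CoextA/B} does not apply because its first argument must be cofree, whereas after your reduction the first argument is $\StA_*^{(1)}$ (or $H_*(Z_s)$), which is not cofree over the relevant quotient Hopf algebra.

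What the paper actually does to bridge this gap is dualise to modules and run the classical Cartan-Eilenberg spectral sequence~\eqref{eq:CESS-algebras} for an extension such as
\[
(\StA_*/\!/\StA_*^{(1)})^*\to(\StA_*/\!/\StA_*^{(2)})^*\to(\StA_*^{(1)}/\!/\StA_*^{(2)})^*,
\]
then uses the Milnor--Moore freeness of $\StA_{(1)}$ over $(\StA_*^{(1)}/\!/\StA_*^{(2)})^*$ to untwist the diagonal action, finally landing on $\Coext^*_{\StA_*/\!/\StA_*^{(1)}}(\StA_*,\F_2)=0$ via Proposition~\ref{prop:CoextA/B}. The $Y_n$ cases are handled the same way, with the $Y_n>Y_{n+1}$ step reducing (after an undoubling) to cofreeness of $\StP(n+1)_*$ over the $P_*$-algebra $\StP(n+1)_*/\!/\StP(n)_*$. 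This Cartan--Eilenberg manoeuvre, which is the reason the paper develops Section~\ref{sec:HomAlg} and emphasises the dual module picture of Remark~\ref{rem:comodules-modules}, is precisely the missing idea in your sketch. Your proposed explicit witnesses $Z_s$, $G_s$, $W$ are not needed---Ravenel's reduction already packages that step---and as described they are too vague to establish the non-acyclicity side anyway.
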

\begin{proof}
We will adopt the strategy of Ravenel's
proof for the analogous results on~$X_n$
and~$BP$. In particular we will make
repeated use of the Adams spectral
sequence. For $2$-local finite type
spectra~$X$ and~$Y$, by~\cite{JMB:CCSS}*{theorem~15.6}
there is a strongly convergent Adams
spectral sequence with
\begin{equation}\label{eq:ASS-cgce}
\mathrm{E}_2^{s,t}(X,Y) =
\Ext_{\StA}^{s,t}(H^*(Y),H^*(X))
\Lra [X,Y\sphat_2\;]^{s-t}.
\end{equation}
We can dualise to homology and use comodules
to obtain another interpretation of the
$\Ext$ group:
\begin{equation}\label{eq:ASS-cgce-homology}
\mathrm{E}_2^{s,t}(X,Y) =
\Coext_{\StA_*}^{s,t}(H_*(X),H_*(Y))
\Lra [X,Y\sphat_2\;]^{s-t}.
\end{equation}

Of course we know that $BP$ is an $\MSp$-module
spectrum so
$\langle \MSp \rangle \geq \langle BP\rangle$.
To prove the strict inequality we follow Ravenel
and reduce to showing the triviality of the
$\mathrm{E}_2$-term of the Adams spectral
sequence
\[
\mathrm{E}_2^{s,t}(BP,\MSp) =
\Coext_{\StA_*}^{s,t}(H_*(BP),H_*(\MSp))
\Lra [BP,\MSp\sphat_2\;]^{s-t}.
\]
By~\eqref{eq:H*MSp},
\[
H_*(\MSp) \iso
(\StA_*\square_{\StA_*/\!/\StA_*^{(2)}}\F_2)\otimes H_*(\MSp)/J
\]
where $H_*(\MSp)/J$ has trivial
$\StA_*/\!/\StA_*^{(2)}$-coaction. Combining
this with~\eqref{eq:H*BP} we have
\begin{align*}
\mathrm{E}_2^{s,*}(BP,\MSp) &\iso
\Coext_{\StA_*}^{s,*}(H_*(BP),\StA_*\square_{\StA_*/\!/\StA_*^{(2)}}\F_2)
\otimes H_*(\MSp)/J   \\
&\iso \Coext_{\StA_*/\!/\StA_*^{(2)}}^{s,*}(\StA_*^{(1)},\F_2)
\otimes H_*(\MSp)/J.
\end{align*}

To compute the $\Coext$ term here we will
dualise and use a Cartan-Eilenberg spectral
sequence of the form~\eqref{eq:CESS-algebras}
for
\[
\Ext_{(\StA_*/\!/\StA_*^{(2)})^*}(\F_2,\StA_{(1)})
\iso
\Coext_{\StA_*/\!/\StA_*^{(2)}}^{s,*}(\StA_*^{(1)},\F_2).
\]
We will base this on the sequence of algebras
\[
\xymatrix{
(\StA_*/\!/\StA_*^{(1)})^*\ar[r] &
(\StA_*/\!/\StA_*^{(2)})^*\ar[r] &
(\StA_*^{(1)}/\!/\StA_*^{(2)})^*
}
\]
which is dual to
\[
\StA_*^{(1)}/\!/\StA_*^{(2)}
\to
\StA_*/\!/\StA_*^{(2)}
\to
\StA_*/\!/\StA_*^{(1)}.
\]
This spectral sequence is tri-graded with
$\mathrm{E}_2$-term
\[
\leftidx{^{\text{\tiny CE}}}{\mathrm{E}}{_2^{s,t}}
=
\Ext^s_{(\StA_*^{(1)}/\!/\StA_*^{(2)})^*}(\F_2,
\Ext_{(\StA_*/\!/\StA_*^{(1)})^*}^t(\F_2,\StA_{(1)}))
\]
where we have suppressed mention of the
internal grading. As $(\StA_*/\!/\StA_*^{(1)})^*$
acts trivially on $\StA_{(1)}$,
\[
\Ext_{(\StA_*/\!/\StA_*^{(1)})^*}^t(\F_2,\StA_{(1)})
\iso
\Ext_{(\StA_*/\!/\StA_*^{(1)})^*}^t(\F_2,\F_2)
\overset{(\StA_*^{(1)}/\!/\StA_*^{(2)})^*}\boxtimes
\StA_{(1)},
\]
where $\overset{(\StA_*^{(1)}/\!/\StA_*^{(2)})^*}\boxtimes$
indicates the $\k$-tensor product with the diagonal
action of the cocommutative Hopf algebra
$(\StA_*^{(1)}/\!/\StA_*^{(2)})^*$. Since there
is a surjection of Hopf algebras
\[
\StA_*^{(1)}\to \StA_*^{(1)}/\!/\StA_*^{(2)},
\]
dually there is an injection
\[
(\StA_*^{(1)}/\!/\StA_*^{(2)})^*\to
(\StA_*^{(1)})^* = \StA_{(1)}
\]
so the Milnor-Moore theorem implies that
$\StA_{(1)}$ is a free
$(\StA_*^{(1)}/\!/\StA_*^{(2)})^*$-module.
Hence as $(\StA_*^{(1)}/\!/\StA_*^{(2)})^*$-modules,
\[
\Ext_{(\StA_*/\!/\StA_*^{(1)})^*}^t(\F_2,\F_2)
\overset{(\StA_*^{(1)}/\!/\StA_*^{(2)})^*}\boxtimes
\StA_{(1)}
\iso
\StA_{(1)}
\otimes
\Ext_{(\StA_*/\!/\StA_*^{(1)})^*}^t(\F_2,\F_2).
\]
Feeding this into our $\mathrm{E}_2$-term we obtain
\begin{align*}
\leftidx{^{\text{\tiny CE}}}{\mathrm{E}}{_2^{s,t}}
&\iso
\Ext^s_{(\StA_*^{(1)}/\!/\StA_*^{(2)})^*}(\F_2,\StA_{(1)})
\otimes
\Ext_{(\StA_*/\!/\StA_*^{(1)})^*}^t(\F_2,\F_2) \\
&\iso
\Coext^s_{\StA_*^{(1)}/\!/\StA_*^{(2)}}(\StA_*^{(1)},\F_2)
\otimes
\Ext_{(\StA_*/\!/\StA_*^{(1)})^*}^t(\F_2,\F_2) \\
&\iso
\Coext^s_{\StA_*/\!/\StA_*^{(1)}}(\StA_*,\F_2)
\otimes
\Ext_{(\StA_*/\!/\StA_*^{(1)})^*}^t(\F_2,\F_2) \\
&= 0
\end{align*}
since $\StA_*$ is a cofree comodule over the
$P_*$-algebra $\StA_*/\!/\StA_*^{(1)}$.

Now we can conclude that $\mathrm{E}_2^{s,*}(BP,\MSp) = 0$
and so $(\MSp\sphat_2)^*(BP)=0$.

%
%

The rest of the proof is similar and uses
the spectral sequence~\eqref{eq:CESS-algebras}
to reduce to calculate some~$\Coext$ groups
using the dual algebras. We will describe
the main details required for these.

To verify that $\langle Y_n\rangle>\langle\MSp\rangle$
the key step involves showing that
\[
\Coext_{\StA_*}^{*,*}(\StA^{(2)}_*,\StP(n)^{(2)}_*)
=
\Coext_{\StA_*}^{*,*}(\StA^{(2)}_*,\StA_*\square_{\StA_*/\!/\StP(n)^{(2)}_*}\F_2)
= 0,
\]
where
\[
\Coext_{\StA_*}^{*,*}(\StA^{(2)}_*,\StA_*\square_{\StA_*/\!/\StP(n)^{(2)}_*}\F_2)
\iso
\Coext_{\StA_*/\!/\StP(n)^{(2)}_*}^{*,*}(\StA^{(2)}_*,\F_2).
\]
By \cite{JPM&KP:MoreConcise}*{corollary 21.2.5}, there is
a sequence of Hopf algebras
\[
\StA^{(2)}_*/\!/\StP(n)^{(2)}_*
=(\StA_*/\!/\StA^{(2)}_*) \backslash\!\backslash (\StA_*/\!/\StP(n)^{(2)}_*)
\to \StA_*/\!/\StP(n)^{(2)}_* \to \StA_*/\!/\StA^{(2)}_*
\]
whose the dual sequence of algebras gives
rise to a Cartan-Eilenberg spectral
sequence~\eqref{eq:CESS-algebras} with
\begin{align*}
\leftidx{^{\text{\tiny CE}}}{\mathrm{E}}{_2^{s,t}}
&\iso
\Ext^s_{\StA_{(2)}}(\F_2,
\Ext_{(\StA_*/\!/\StA_*^{(2)})^*}^t(\F_2,\StA_{(2)}))  \\
&\iso
\Ext^s_{\StA_{(2)}}(\F_2,
\Ext_{(\StA_*/\!/\StA_*^{(2)})^*}^t(\F_2,\F_2)
\overset{\StA_{(2)}}\boxtimes\StA_{(2)})  \\
&\iso
\Ext^s_{\StA_{(2)}}(\F_2,\StA_{(2)})
\otimes\Ext_{(\StA_*/\!/\StA_*^{(2)})^*}^t(\F_2,\F_2))  \\
&= 0
\end{align*}
since $\StA_{(2)}$ is a \Palgebra.


Finally, the verification that
$\langle Y_n\rangle>\langle Y_{n+1}\rangle$ proceeds
in this fashion with $\StP(n+1)^{(2)}_*$ in place
of $\StA_*^{(2)}$ and involves the vanishing of
\[
\Coext_{\StP(n+1)^{(2)}_*/\!/\StP(n)^{(2)}_*}^{s,4*}(\StP(n+1)^{(2)}_*,\F_2)
\iso
\Coext_{\StP(n+1)_*/\!/\StP(n)_*}^{s,*}(\StP(n+1)_*,\F_2)
\]
which follows from the facts that $\StP(n+1)_*$
is cofree over $\StP(n+1)_*/\!/\StP(n)_*$ by
the Milnor-Moore theorem and the latter is
a $P_*$-algebra.
\end{proof}

As an exercise, the reader may like to rederive
Ravenel's results for $BP$ and the $X_n$'s using
our approach.

There are some other interesting spectra that
intermediate between $\MSp$ and $BP$. These
include Pengelley's $BoP$~\cite{DJP:BoP},
$\MSU$ and the sequence of $\MSp$-module
spectra obtained by killing the Ray elements
$\phi_{(s)}=\phi_{s}\in\pi_{2^{s+2}-3}(\MSp)$
as described by Botvinnik~\cite{BIB:MfdsSingANSS}
(both including and excluding the generator
of $\pi_1(\MSp)$). In fact if we kill finitely
many of the Ray elements this does not change
the Bousfield class since they are all nilpotent,
whereas if we kill all of them we get a spectrum
whose Bousfield class is the same as~$BP$;
similarly, $\langle\MSU\rangle=\langle BP\rangle$.
So there are no novel Bousfield classes between
$\langle\MSp\rangle$ and $\langle BP\rangle$
stemming from these.

\bigskip
\begin{bibdiv}
\begin{biblist}


\bib{JMB:CCSS}{article}{
   author={Boardman, J. M.},
   title={Conditionally convergent spectral
   sequences},
   journal={Contemp. Math.},
   volume={239},
   date={1999},
   pages={49\ndash84},
}

\bib{BIB:MfdsSingANSS}{book}{
   author={Botvinnik, B. I.},
   title={Manifolds with Singularities and
   the Adams-Novikov Spectral Sequence},
   series={London Mathematical Society
   Lecture Note Series},
   volume={170},
   publisher={Cambridge University Press},
   date={1992},
}


\bib{HC&SE:HomAlg}{book}{
   author={Cartan, H.},
   author={Eilenberg, S.},
   title={Homological Algebra},
   note={With an appendix by David A. Buchsbaum;
   Reprint of the 1956 original},
   publisher={Princeton University Press},
   date={1999},
}

\bib{MH:HtpyThyComods}{article}{
   author={Hovey, M.},
   title={Homotopy theory of comodules
   over a Hopf algebroid},
   journal={Contemp. Math.},
   volume={346},
   date={2004},
   pages={261\ndash304},
}

\bib{MH&JHP:BousfieldLattice}{article}{
   author={Hovey, M.},
   author={Palmieri, J. H.},
   title={The structure of the Bousfield
   lattice},
   journal={Contemp. Math.},
   volume={239},
   date={1999},
   pages={175\ndash196},
}


\bib{HRM:Book}{book}{
   author={Margolis, H. R.},
   title={Spectra and the Steenrod Algebra:
   Modules over the Steenrod algebra and
   the stable homotopy category},
   publisher={North-Holland},
   date={1983},
}

\bib{JPM&KP:MoreConcise}{book}{
   author={May, J. P.},
   author={Ponto, K.},
   title={More Concise Algebraic Topology:
   Localization, Completion, and Model Categories},
   publisher={University of Chicago Press},
   date={2012},
}

\bib{M&M:HopfAlg}{article}{
   author={Milnor, J. W.},
   author={Moore, J. C.},
   title={On the structure of Hopf
   algebras},
   journal={Ann. of Math.},
   volume={81},
   date={1965},
   pages={211\ndash264},
}

\bib{MM&HT:TopLieGps}{book}{
   author={Mimura, M.},
   author={Toda, H.},
   title={Topology of Lie Groups, \emph{I \& II}},
   series={Translations of Mathematical Monographs},
   volume={91},
   publisher={American Mathematical Society},
   date={1991},
}

\bib{JCM-FPP:NearlyFrobAlgs}{article}{
   author={Moore, J. C.},
   author={Peterson, F. P.},
   title={Nearly Frobenius algebras,
   Poincar\'{e} algebras and their modules},
   journal={J. Pure Appl. Algebra},
   volume={3},
   date={1973},
   pages={83\ndash93},
}

\bib{DJP:BoP}{article}{
   author={Pengelley, D. J.},
   title={The homotopy type of\/ $\MSU$},
   journal={Amer. J. Math.},
   volume={104},
   date={1982},
   pages={1101\ndash1123},
}

\bib{DCR:Localn}{article}{
   author={Ravenel, D. C.},
   title={Localization with respect to
   certain periodic homology theories},
   journal={Amer. J. Math.},
   volume={106},
   date={1984},
   pages={351\ndash414},
}

\bib{DCR:GreenBook}{book}{
   author={Ravenel, D. C.},
   title={Complex Cobordism and Stable Homotopy
   Groups of Spheres},
   series={Pure and Applied Mathematics},
   volume={121},
   publisher={Academic Press},
   date={1986},
}
	
\bib{CAW:HomAlg}{book}{
   author={Weibel, C. A.},
   title={An Introduction to Homological Algebra},
   series={Cambridge Studies in Advanced Mathematics},
   volume={38},
   publisher={Cambridge University Press},
   date={1994},
}

\end{biblist}
\end{bibdiv}

\end{document}